\documentclass[12pt, reqno]{amsart}

\author[A.~Aveni]{Andrea Aveni}
\address{Department of Statistical Science,
Duke University, 214 Old Chemistry, Durham, NC 27708-0251}
\email{andrea.aveni@duke.edu}

\author[P.~Leonetti]{Paolo Leonetti}
\address{Department of Economics, Universit\`a degli Studi dell'Insubria, via Monte Generoso 71, Varese 21100, Italy} 
\email{leonetti.paolo@gmail.com}
\urladdr{\url{https://sites.google.com/site/leonettipaolo/}}

\keywords{Normal numbers; ideal cluster points; ideal limit points; maximal set of accumulation points; regular matrices; Knopp core.}
\subjclass[2020]{Primary: 11K16. Secondary: 40A35, 11A63.}


\title{Most numbers are not normal}

\usepackage[T1]{fontenc}
\usepackage{amsmath}
\usepackage{amssymb}
\usepackage{amsthm}
\usepackage[left=2.5cm, right=2.5cm, top=3cm]{geometry}
\usepackage{hyperref}
\usepackage{fancyhdr}
\usepackage[inline]{enumitem}
\usepackage{comment}
\usepackage{nicefrac}
\usepackage{bm}
\usepackage{mathrsfs}
\usepackage{graphicx}
\usepackage[utf8]{inputenc}
\usepackage{cancel}
\usepackage{mathtools}

\newcommand{\vertiii}[1]{{\left\vert\kern-0.25ex\left\vert\kern-0.25ex\left\vert #1 
    \right\vert\kern-0.25ex\right\vert\kern-0.25ex\right\vert}}
		
\AtBeginDocument{%
   \def\MR#1{}
}

\newtheorem{thm}{Theorem}[section]
\newtheorem{cor}[thm]{Corollary}

\newtheorem{prop}[thm]{Proposition}

\theoremstyle{definition} 

\let\olddefi\defi
\renewcommand{\defi}{\olddefi\normalfont}
\newtheorem{example}[thm]{Example}
\let\oldexample\example
\renewcommand{\example}{\oldexample\normalfont}
\newtheorem{rmk}[thm]{Remark}
\let\oldrmk\rmk
\renewcommand{\rmk}{\oldrmk\normalfont}

\theoremstyle{remark}

\newtheorem*{claim*}{\textsc{Claim}}

\pagestyle{fancy}
\fancyhf{}
\fancyhead[CO]
{\textsc{Most numbers are not normal}}
\fancyhead[CE]
{\textsc{Andrea Aveni} and \textsc{Paolo Leonetti}}
\fancyhead[RO,LE]{\thepage}

\setlength{\headheight}{12pt}

\hypersetup{
    pdftitle={Most number are not normal},
    pdfauthor={Paolo Leonetti and Andrea Aveni},
    pdfmenubar=false,
    pdffitwindow=true,
    pdfstartview=FitH,
    colorlinks=true,
    linkcolor=blue,
    citecolor=green,
    urlcolor=cyan
}

\uchyph=0

\providecommand{\MR}[1]{}

\providecommand{\MR}{\relax\ifhmode\unskip\space\fi MR }

\providecommand{\href}[2]{#2}

\begin{document}

\begin{abstract}
\noindent 
We show, from a topological viewpoint, that most numbers are not normal in a strong sense. More precisely, the set of numbers $x \in (0,1]$ with the following property is comeager: 
for all integers $b\ge 2$ and $k\ge 1$, the sequence of vectors made by the frequencies of all possibile strings of length $k$ in the $b$-adic representation of $x$ has a maximal subset of accumulation points, and each of them is the limit of a subsequence with an index set of nonzero asymptotic density. 
This extends and provides a streamlined proof of 
the main result given by Olsen in [Math. Proc. Cambridge Philos. Soc. \textbf{137} (2004), 43--53]. 
We provide analogues in the context of analytic P-ideals and regular matrices. 
\end{abstract}

\maketitle
\thispagestyle{empty}

\section{Introduction}

A real number $x \in (0,1]$ is normal if, informally, for each base $b\ge 2$, its $b$-adic expansion contains every finite string with the expected uniform limit frequency (the precise definition is given in the next few lines). 
It is well known that most numbers $x$ are normal from a measure theoretic viewpoint, see e.g. \cite{Berg2020} for history and generalizations. 
However, it has been recently shown that certain subsets of nonnormal numbers 
may have full Hausdorff dimension, see e.g. \cite{MR2166730, MR1759398}. 
The aim of this work is to show that, from a topological viewpoint,  most numbers are not normal in a strong sense. 
This provides another nonanalogue between measure and category, cf. \cite{MR584443}. 

For each $x \in (0,1]$, denote its unique nonterminating $b$-adic expansion by 
\begin{equation}\label{eq:badic}
x=\sum\nolimits_{n\ge 1}\frac{d_{b,n}(x)}{b^n},
\end{equation}
with each digit $d_{b,n}(x) \in \{0,1,\ldots,b-1\}$, where $b\ge 2$ is a given integer. 
Then, for each string $\bm{s}=s_1\cdots s_k$ with digits $s_j \in \{0,1,\ldots,b-1\}$ and each $n\ge 1$, write $\pi_{b,\bm{s},n}(x)$ for the proportion of strings $\bm{s}$ in the $b$-adic expansion of $x$ which start at some position $\le n$, i.e.,
$$
\pi_{b,\bm{s},n}(x):=
\frac{\#\{i\in \{1,\ldots,n\}: d_{b,i+j-1}(x)=s_j \text{ for all }j=1,\ldots,k\}}{n}.
$$
In addition, let $S_{b}^k$ be the set of all possible strings $\bm{s}=s_1\cdots s_k$ in base $b$ of length $k$, hence $\#S_{b}^k=b^k$, and denote by $\bm{\pi}^{k}_{b,n}(x)$ the vector $(\pi_{b,\bm{s},n}(x): \bm{s} \in S_{b}^k)$. Of course, $\bm{\pi}^{k}_{b,n}(x)$ belongs to the $(b^k-1)$-dimensional simplex for each $n$.   
However, the components of $\bm{\pi}^{k}_{b,n}(x)$ satisfy an additional requirement: if $k\ge 2$ and $\bm{s}=s_1\cdots s_{k-1}$ is a string in $S_b^{k-1}$, then 
$$
\pi_{b,\bm{s},n}(x)
=\sum\nolimits_{s_k}\pi_{b,\bm{s} s_{k},n}(x)
=\sum\nolimits_{s_0}\pi_{b,s_0\bm{s},n}(x)+O\left(1/n\right)
 \quad \quad \text{ as }n\to \infty,
$$
where $s_0\bm{s}$ and $\bm{s}s_k$ stand for the concatened strings (indeed, the above identity is obtained by a double counting of the occurrences of the string $\bm{s}$ as the occurrences of all possible strings $\bm{s}s_k$; or, equivalently, as the occurrences of all possible strings $s_0\bm{s}$, with the caveat of counting them correctly at the two extreme positions, hence with an error of at most $1$). It follows that the set $\mathrm{L}^k_{b}(x)$ of accumulation points of the sequence of vectors $(\bm{\pi}^{k}_{b,n}(x): n\ge 1)$ is contained in $\Delta_{b}^k$, where
\begin{displaymath}
\begin{split}
\Delta_{b}^k:=\left\{(p_{\bm{s}})_{\bm{s} \in S_{b}^k} \in \mathbf{R}^{b^k}:\sum\nolimits_{\bm{s}} p_{\bm{s}}=1, \right. &
p_{\bm{s}}\ge 0 \text{ for all }\bm{s} \in S_{b}^k,  \\
&\left. 
\text{ and }\sum\nolimits_{s_0}p_{s_0\bm{s}}=\sum\nolimits_{s_k}p_{\bm{s}s_k} \text{ for all }\bm{s} \in S_{b}^{k-1}\right\}.
\end{split}
\end{displaymath}

Then $x$ is said to be \emph{normal} if 
$$
\forall b\ge 2, \forall k\ge 1, \forall \bm{s} \in S_{b}^k, \quad 
\lim_{n\to \infty} \pi_{b,\bm{s},n}(x)=1/b^{k}.
$$ 
Hence, if $x$ is normal, then $\mathrm{L}_{b}^k(x)=\{(1/b^{k}, \ldots, 1/b^{k})\}$. 
Olsen proved in \cite{MR2075041} that the subset of nonnormal numbers with maximal set of accumulation points 
is topologically large: 
\begin{thm}\label{thm:olson2004}
The set $\{x \in (0,1]: \mathrm{L}_{b}^k(x)=\Delta_{b}^k \text{ for all }b\ge 2, k\ge 1\}$ is comeager. 
\end{thm}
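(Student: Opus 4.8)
The plan is to combine a Baire-category reduction with an explicit combinatorial realization of block frequencies on de Bruijn graphs. First I would fix $b\ge 2$ and $k\ge 1$ together with a countable dense set $D_{b,k}=\{p_{b,k,m}:m\ge 1\}$ of $\Delta_b^k$. Since the set of accumulation points $\mathrm{L}_b^k(x)$ is always a closed subset of $\Delta_b^k$ (the inclusion being granted by the excerpt), one has $\mathrm{L}_b^k(x)=\Delta_b^k$ if and only if $D_{b,k}\subseteq \mathrm{L}_b^k(x)$. Unfolding the definition of cluster point, the target set becomes
$$
T=\bigcap_{b\ge 2,\,k,m,j\ge 1}\ \bigcap_{N\ge 1}\ \bigcup_{n\ge N}\left\{x\in(0,1]:\lVert \bm{\pi}^{k}_{b,n}(x)-p_{b,k,m}\rVert<1/j\right\}.
$$
Because $\bm{\pi}^{k}_{b,n}(x)$ depends only on the digits $d_{b,1}(x),\ldots,d_{b,n+k-1}(x)$, each set in the innermost union is a union of $b$-adic cylinders, hence open off the meager set of $b$-adic rationals. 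Thus $T$ contains a dense $G_\delta$ as soon as each set $U_{b,k,m,j,N}:=\bigcup_{n\ge N}\{\cdots\}$ is shown to be dense, and the theorem then follows from the Baire category theorem.

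The density of $U_{b,k,m,j,N}$ reduces to a purely combinatorial statement, which I would isolate as a lemma: for every $p\in\Delta_b^k$, every $\varepsilon>0$, and every $L\ge 1$ there is a finite word $w$ over $\{0,\ldots,b-1\}$ with $|w|\ge L$ whose empirical frequencies of length-$k$ blocks lie within $\varepsilon$ of $p$. Granting this, to meet an arbitrary cylinder fixing a prefix $u$ of length $\ell$, I would take $w$ as in the lemma with $\varepsilon=1/(2j)$ and $|w|$ so large that $n:=\ell+|w|-(k-1)\ge N$, and let $x$ be any number whose nonterminating expansion begins with $uw$. The length-$k$ blocks overlapping the prefix $u$ number at most $\ell+k$, so they perturb each coordinate of $\bm{\pi}^{k}_{b,n}(x)$ by at most $(\ell+k)/n$; choosing $|w|$ large makes this $<1/(2j)$, whence $\lVert\bm{\pi}^{k}_{b,n}(x)-p_{b,k,m}\rVert<1/j$ and $x\in U_{b,k,m,j,N}$.

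For the lemma I would pass to the de Bruijn graph $G=B(b,k-1)$, whose vertices are the strings in $S_b^{k-1}$ and whose edges are the strings $\bm{s}=s_1\cdots s_k\in S_b^k$, with $\bm{s}$ running from $s_1\cdots s_{k-1}$ to $s_2\cdots s_k$ (the case $k=1$ is trivial and handled directly, since $\Delta_b^1$ is the full simplex). A walk in $G$ corresponds to a word whose consecutive length-$k$ blocks are exactly the traversed edges, so the block frequencies of a word equal the edge-traversal frequencies of the associated walk. The key observation is that the constraints defining $\Delta_b^k$ say precisely that $p$ is a normalized circulation on $G$, namely nonnegative, of total mass $1$, with in-flow equal to out-flow at every vertex. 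By the classical decomposition of circulations into cycles, $\Delta_b^k=\mathrm{conv}\{v_\gamma:\gamma\text{ a simple directed cycle of }G\}$, where $v_\gamma$ is the normalized edge-indicator of $\gamma$. Each $v_\gamma$ is realized exactly by traversing $\gamma$ repeatedly, and an arbitrary $p=\sum_{i=1}^{r}\lambda_i v_{\gamma_i}$ is then approximated by concatenating the corresponding periodic words with lengths in proportion $\lambda_1:\cdots:\lambda_r$; the only discrepancy comes from the $O(rk)$ blocks straddling the $r-1$ junctions, which is negligible once the total length is large.

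The routine part is the Baire bookkeeping and the prefix estimate; the substance lies in the realization lemma. Within it, the two points that I expect to demand care are the identification of $\Delta_b^k$ with the circulation polytope of $G$, together with the cycle decomposition exhibiting it as the convex hull of the $v_\gamma$, and the accounting of the junction blocks that shows concatenation approximates convex combinations up to an $O(1/n)$ error. The cleanest route to the first point should be a direct verification that the incidence constraints $\sum_{s_0}p_{s_0\bm{s}}=\sum_{s_k}p_{\bm{s}s_k}$ are exactly Kirchhoff's conservation law at the vertex $\bm{s}$ of $G$, after which the flow-decomposition theorem applies verbatim.
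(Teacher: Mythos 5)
Your proposal is correct, and on the decisive step it takes a genuinely different route from the paper. The paper derives Theorem \ref{thm:olson2004} as the case $\mathcal{I}=\mathrm{Fin}$ of Theorem \ref{thm:main}; its proof runs the same Baire-category reduction you describe (countable dense subset of $\Delta_b^k$, closedness of the set of cluster points, a decomposition of the complement into candidate nowhere dense sets), but at the crucial moment it simply \emph{cites} Olsen's earlier result \cite[Theorem 1]{MR2034019} for the existence of a point $x^\star$ with a prescribed prefix and with $\lim_n \bm{\pi}_{b,n}^k(x^\star)=\bm{\eta}_t$, and then uses the fact that $\bm{\pi}_{b,n}^k$ depends only on the first $n+k-1$ digits to cut out the cylinder $V$. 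You instead prove the realization statement from scratch: the identification of the constraints defining $\Delta_b^k$ with Kirchhoff's law on the de Bruijn graph $B(b,k-1)$ is exactly right (the strings $s_0\bm{s}$ are the in-edges and the strings $\bm{s}s_k$ the out-edges of the vertex $\bm{s}$), the flow-decomposition theorem does exhibit $\Delta_b^k$ as the convex hull of normalized simple-cycle indicators, and the concatenation-with-proportional-lengths argument correctly controls the $O(rk)$ junction blocks and the $O(\ell/n)$ prefix contamination. What your approach buys is a self-contained and combinatorially transparent proof of the only nontrivial ingredient, at the cost of being tied to $\mathcal{I}=\mathrm{Fin}$: the paper's formulation via a Talagrand partition $\{I_q\}$ of $\mathbf{N}$ into finite intervals (trivial for $\mathrm{Fin}$, where one may take $I_q=\{q\}$) is what lets the same nowhere-density argument go through for an arbitrary meager ideal, since hitting a single block $I_q$ with a full run of indices close to $\bm{\eta}_t$ is enough to defeat membership in $S_{t,m,p}$. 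The only points to tidy in a write-up are the bookkeeping you already flag as routine: the cylinders in $(0,1]$ with the nonterminating expansion are half-open intervals rather than open sets, so one should say that each $U_{b,k,m,j,N}$ is comeager (its complement is nowhere dense up to a countable set) rather than literally open and dense.
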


First, we strenghten Theorem \ref{thm:olson2004} by showing that the set of accumulation points $\mathrm{L}_{b}^k(x)$ can be replaced by the much smaller subset of 
accumulation points $\bm{\eta}$ such that every neighborhood of $\bm{\eta}$ contains \textquotedblleft sufficiently many\textquotedblright\,elements of the sequence, where \textquotedblleft sufficiently many\textquotedblright\,is meant with respect to a suitable ideal $\mathcal{I}$ of subsets of the positive integers $\mathbf{N}$; see Theorem \ref{thm:main}. 
Hence, Theorem \ref{thm:olson2004} corresponds to the case where $\mathcal{I}$ is the family of finite sets. 
 
Then, for certain ideals $\mathcal{I}$ (including the case of the family of asymptotic density zero sets), we even strenghten the latter result by showing that each accumulation point $\bm{\eta}$ can be chosen to be the limit of a subsequence with \textquotedblleft sufficiently many\textquotedblright\,indexes (as we will see in the next Section, these additional requirements are not equivalent); see Theorem \ref{thm:mainlimit}. 
The precise definitions, together with the main results, follow in Section \ref{sec:main}.


\section{Main results}\label{sec:main}

An ideal $\mathcal{I}\subseteq \mathcal{P}(\mathbf{N})$ is a family closed under finite union and subsets. 
It is also assumed that $\mathcal{I}$ contains the family of finite sets $\mathrm{Fin}$ and it is different from $\mathcal{P}(\mathbf{N})$. 
Every subset of $\mathcal{P}(\mathbf{N})$ is endowed with the relative Cantor-space topology. 
In particular, we may speak about $G_\delta$-subsets of $\mathcal{P}(\mathbf{N})$, $F_\sigma$-ideals, meager ideals, analytic ideals, etc. 
In addition, we say that $\mathcal{I}$ is a P-ideal if it is $\sigma$-directed modulo finite sets, i.e., for each sequence $(S_n)$ of sets in $\mathcal{I}$ there exists $S \in \mathcal{I}$ such that $S_n\setminus S$ is finite for all $n \in \mathbf{N}$.  
Lastly, we denote by $\mathcal{Z}$ the ideal of asymptotic density zero sets, i.e., 
\begin{equation}\label{eq:definitionZ}
\mathcal{Z}=\left\{S\subseteq \mathbf{N}: \mathsf{d}^\star(S)=0\right\},
\end{equation}
where $\mathsf{d}^\star(S):=\limsup_n \frac{1}{n}\#(S\cap [1,n])$ stands for the upper asymptotic density of $S$, see e.g. \cite{MR4054777}. 
We refer to \cite{MR2777744} for a recent survey on ideals and associated filters.

Let $x=(x_n)$ be a sequence taking values in a topological vector space $X$.  Then we say that $\eta \in X$ is an $\mathcal{I}$\emph{-cluster point} of $x$ if $\{n \in \mathbf{N}: x_n \in U\} \notin \mathcal{I}$ for all open neighborhoods $U$ of $\eta$. 
Note that $\mathrm{Fin}$-cluster points are the ordinary accumulation points. 
Usually $\mathcal{Z}$-cluster points are referred to as \emph{statistical cluster points}, see e.g. \cite{MR1181163}. 
It is worth noting that $\mathcal{I}$-cluster points have
been studied much before under a different name. 
Indeed, as it follows by \cite[Theorem 4.2]{MR3920799} and \cite[Lemma 2.2]{MR4126774}, they correspond to classical “cluster points” of a filter (depending on $x$) on the underlying space, cf. \cite[Definition 2, p.69]{MR1726779}. 

With these premises, for each $x \in (0,1]$ and for all integers $b\ge 2$ and $k\ge 1$, let $\Gamma_b^k (x,\mathcal{I})$ be the set of $\mathcal{I}$-cluster points of the sequence $(\bm{\pi}_{b,n}^k(x):n\ge 1)$.  
\begin{thm}\label{thm:main}
The set $\{x \in (0,1]: \Gamma_b^k (x,\mathcal{I})=\Delta_{b}^k \text{ for all }b\ge 2,k\ge 1\}$ is comeager, provided that $\mathcal{I}$ is a meager ideal. 
\end{thm}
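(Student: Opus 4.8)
The plan is to prove comeagerness by exhibiting the target set as a countable intersection of comeager sets, using a Baire-category argument built around the key fact that a meager ideal cannot "absorb" the indices where the digit blocks of $x$ are forced to realize prescribed frequencies. Let me sketch the approach.

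First, I would reduce the statement to a countable family of conditions. Since there are only countably many triples $(b,k,\bm{\eta})$ with $\bm{\eta}$ ranging over a fixed countable dense subset $D_b^k$ of $\Delta_b^k$, and since $\Gamma_b^k(x,\mathcal{I})$ is always a closed subset of $\Delta_b^k$, it suffices to show that for each fixed $b,k$ and each $\bm{\eta}\in D_b^k$ the set
\[
A(b,k,\bm{\eta}):=\{x\in(0,1]:\bm{\eta}\in\Gamma_b^k(x,\mathcal{I})\}
\]
is comeager; the target set is then $\bigcap_{b,k}\bigcap_{\bm{\eta}\in D_b^k}A(b,k,\bm{\eta})$, a countable intersection of comeager sets, hence comeager, and the density of $D_b^k$ together with closedness of $\Gamma_b^k(x,\mathcal I)$ upgrades "$D_b^k\subseteq\Gamma_b^k(x,\mathcal I)$" to "$\Gamma_b^k(x,\mathcal I)=\Delta_b^k$".

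Next I would unpack what $\bm{\eta}\in\Gamma_b^k(x,\mathcal{I})$ means and reformulate $A(b,k,\bm{\eta})$ in a way amenable to category arguments. Since $\mathcal I$ is meager, by the Talagrand--Jalali-Naini characterization there is a partition of $\mathbf N$ into consecutive finite blocks $I_1<I_2<\cdots$ such that no member of $\mathcal I$ contains infinitely many of these blocks entirely; equivalently, the set of indices $n$ lying in "fully selected" blocks is $\mathcal I$-positive whenever we select infinitely many blocks. The strategy is then to show that for a comeager set of $x$, one can find, for every $\varepsilon>0$, infinitely many of these blocks on which the empirical frequency vector $\bm\pi_{b,n}^k(x)$ stays within $\varepsilon$ of $\bm\eta$ for all $n$ ranging through the block. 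Because those blocks cannot all be swallowed by any $\mathcal I$-set, the index set $\{n:\|\bm\pi_{b,n}^k(x)-\bm\eta\|<\varepsilon\}$ is $\mathcal I$-positive, giving $\bm\eta\in\Gamma_b^k(x,\mathcal I)$.

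The concrete mechanism for forcing frequencies I would use is the standard "long block insertion": given any finite initial digit string, one can append a long run of digits whose block statistics approximate any prescribed $\bm\eta\in\Delta_b^k$ arbitrarily well, and (crucially) such that the running averages $\bm\pi_{b,n}^k$ are driven close to $\bm\eta$ and held there throughout a long suffix. The compatibility constraint defining $\Delta_b^k$ (the marginal-matching condition $\sum_{s_0}p_{s_0\bm s}=\sum_{s_k}p_{\bm s s_k}$) is exactly what guarantees that a genuine digit sequence with these $k$-block frequencies exists; concretely one realizes $\bm\eta$ via a closed walk on the de Bruijn graph on $(k{-}1)$-strings whose edge-traversal proportions match $\bm\eta$, which is possible precisely because $\bm\eta$ satisfies the flow-conservation (Eulerian) condition encoded in $\Delta_b^k$. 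I would package this as a lemma: the set of $x$ whose expansion contains, beyond every position, a block on which $\bm\pi^k_{b,\cdot}(x)$ is $\varepsilon$-close to $\bm\eta$ for a "long" run is open and dense, hence its intersection over a sequence $\varepsilon\downarrow 0$ is comeager, and this intersection is contained in $A(b,k,\bm\eta)$ once the blocks are aligned with the Talagrand partition.

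The main obstacle I expect is precisely the interface between the combinatorial frequency-forcing and the ideal-theoretic positivity: I must arrange the "good" blocks of digits to sit inside infinitely many of the Talagrand partition blocks $I_m$ so that the resulting good index set is genuinely $\mathcal I$-positive, not merely infinite. This requires coordinating the lengths (the frequency-forcing runs must be long relative to the prior prefix to dominate the running average, yet must fit within, or be indexed by, infinitely many partition blocks), and it is the step where meagerness of $\mathcal I$ is used in an essential, non-negotiable way — mere $\mathcal I\supsetneq\mathrm{Fin}$ would not suffice. A secondary technical point is verifying that the density of the chosen $D_b^k$ transfers correctly through the closure of $\Gamma_b^k(x,\mathcal I)$, but that is routine once the per-point construction is in place.
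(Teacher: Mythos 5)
Your proposal follows essentially the same route as the paper's proof: reduce to a countable dense subset of $\Delta_b^k$ using the closedness of $\Gamma_b^k(x,\mathcal{I})$, invoke Talagrand's interval-partition characterization of meager ideals, and show that the relevant cylinder sets force the good index set $\{n:\|\bm{\pi}_{b,n}^k(x)-\bm{\eta}\|<\varepsilon\}$ to contain infinitely many partition blocks (your dense-open formulation is just the complement of the paper's nowhere-dense sets $S_{t,m,p}$). The only substantive difference is that you construct the frequency-realizing digit blocks explicitly via closed walks on the de Bruijn graph, where the paper simply cites Olsen's existence theorem for a point $x^\star$ with $\lim_n\bm{\pi}_{b,n}^k(x^\star)=\bm{\eta}_t$; both handle the alignment with the Talagrand blocks in the same way, by truncating the cylinder at the end of a partition block.
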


The class of meager ideals is really broad. Indeed, it contains $\mathrm{Fin}$, $\mathcal{Z}$, the summable ideal $\{S\subseteq \mathbf{N}: \sum_{n \in S}1/n<\infty\}$, the ideal generated by the upper Banach density, the analytic P-ideals, the Fubini sum $\mathrm{Fin}\times \mathrm{Fin}$, the random graph ideal, etc.; cf. e.g. \cite{MPS, MR2777744}. 
Note 
that $\Gamma_b^k (x,\mathcal{I})=\mathrm{L}_b^k(x)$ if $\mathcal{I}=\mathrm{Fin}$. 
Therefore Theorem \ref{thm:main} significantly strenghtens Theorem \ref{thm:olson2004}.  
\begin{rmk}\label{rmk:maximal}
It is not difficult to see that Theorem \ref{thm:main} does not hold without any restriction on $\mathcal{I}$. Indeed, if $\mathcal{I}$ is a maximal ideal (i.e., the complement of a free ultrafilter on $\mathbf{N})$, then for each $x \in (0,1]$ and all integers $b\ge 2$, $k\ge 1$, we have that the sequence $(\bm{\pi}_{b,n}^k(x):n\ge 1)$ is bounded, hence it is $\mathcal{I}$-convergent so that $\Gamma_b^k (x,\mathcal{I})$ is a singleton. 
\end{rmk}

On a similar direction, if $x=(x_n)$ is a sequence taking values in a topological vector space $X$, then $\eta \in X$ is an $\mathcal{I}$\emph{-limit point} of $x$ if there exists a subsequence $(x_{n_k})$ such that $\lim_k x_{n_k}=\eta$ and $\mathbf{N}\setminus \{n_1,n_2,\ldots\} \in \mathcal{I}$. 
Usually $\mathcal{Z}$-limit points are referred to as \emph{statistical limit points}, see e.g. \cite{MR1181163}. 
Similarly, for each $x \in (0,1]$ and for all integers $b\ge 2$ and $k\ge 1$, let $\Lambda_b^k (x,\mathcal{I})$ be the set of $\mathcal{I}$-limit points of the sequence $(\bm{\pi}_{b,n}^k(x):n\ge 1)$. The analogue of Theorem \ref{thm:main} for $\mathcal{I}$-limit points follows. 
\begin{thm}\label{thm:mainlimit}
The set $\{x \in (0,1]: \Lambda_b^k (x,\mathcal{I})=\Delta_{b}^k \text{ for all }b\ge 2,k\ge 1\}$ is comeager, provided that $\mathcal{I}$ is an analytic P-ideal or an $F_\sigma$-ideal. 
\end{thm}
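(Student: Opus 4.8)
The plan is to fix integers $b\ge 2$ and $k\ge 1$ (the full statement then follows by intersecting over the countably many pairs $(b,k)$) and to produce a comeager set of $x$ for which $\Lambda_b^k(x,\mathcal{I})=\Delta_b^k$. One inclusion is free: every $\mathcal{I}$-limit point is an $\mathcal{I}$-cluster point, and every $\mathcal{I}$-cluster point is an ordinary accumulation point (because $\mathcal{I}\supseteq\mathrm{Fin}$), hence lies in $\Delta_b^k$; thus $\Lambda_b^k(x,\mathcal{I})\subseteq\Gamma_b^k(x,\mathcal{I})\subseteq\Delta_b^k$ for every $x$. Since analytic P-ideals and $F_\sigma$-ideals are meager, Theorem~\ref{thm:main} gives $\Gamma_b^k(x,\mathcal{I})=\Delta_b^k$ on a comeager set, so it remains only to force the reverse inclusion $\Delta_b^k\subseteq\Lambda_b^k(x,\mathcal{I})$ on a comeager set.

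The engine is the Jalali-Naini--Talagrand characterization of meager ideals: since $\mathcal{I}$ is meager, there is a partition of $\mathbf{N}$ into consecutive finite intervals $I_1<I_2<\cdots$ such that every $A\in\mathcal{I}$ contains $I_j$ entirely for only finitely many $j$; equivalently, any set swallowing infinitely many of the $I_j$ is $\mathcal{I}$-positive. Fix a countable dense set $D\subseteq\Delta_b^k$. For $d\in D$ and $m,N\in\mathbf{N}$, let $G_{d,m,N}$ be the set of $x\in(0,1]$ for which there exists $j\ge N$ with $\bm{\pi}_{b,n}^k(x)\in B(d,1/m)$ for every $n\in I_j$. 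Each $G_{d,m,N}$ is open, being a union over $j\ge N$ of conditions on finitely many digits; and it is dense, because after any finite prefix one may append a long block of digits whose $k$-string frequencies approximate $d$ (such blocks exist precisely because the consistency constraints defining $\Delta_b^k$ are the realizability constraints for limiting $k$-block frequencies, the same fact that identifies $\Delta_b^k$ in the first place). The running averages $\bm{\pi}_{b,n}^k(x)$ then converge to $d$, so choosing $j\ge N$ with $\min I_j$ large enough places the whole interval $I_j$ inside $B(d,1/m)$. Set $G:=\bigcap_{d\in D,\,m,N}G_{d,m,N}$, a comeager set.

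Finally I would verify $\Delta_b^k\subseteq\Lambda_b^k(x,\mathcal{I})$ for every $x\in G$. Given an arbitrary $\eta\in\Delta_b^k$, choose $d_\ell\in D$ with $\|d_\ell-\eta\|<1/(2\ell)$, and then inductively pick $j_1<j_2<\cdots$ with $\bm{\pi}_{b,n}^k(x)\in B(d_\ell,1/(2\ell))\subseteq B(\eta,1/\ell)$ for all $n\in I_{j_\ell}$, which is possible since $x\in G_{d_\ell,2\ell,j_{\ell-1}+1}$. Put $M_\eta:=\bigcup_\ell I_{j_\ell}$. By the Talagrand property $M_\eta\notin\mathcal{I}$, while $\bm{\pi}_{b,n}^k(x)\to\eta$ as $n\to\infty$ along $M_\eta$; hence $\eta$ is an $\mathcal{I}$-limit point. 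As $\eta$ was arbitrary, $\Lambda_b^k(x,\mathcal{I})=\Delta_b^k$, as desired.

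The main obstacle, and the reason the limit-point statement is genuinely more delicate than Theorem~\ref{thm:main}, is the passage from the countable dense set $D$ to the whole simplex $\Delta_b^k$: the set of $\mathcal{I}$-limit points need not be closed, so one cannot simply take closures as for cluster points. This is resolved by building, separately for each target $\eta$, a bespoke $\mathcal{I}$-positive index set $M_\eta$ out of intervals on which the frequencies cluster around centers $d_\ell\to\eta$ with shrinking radius. A secondary technical point is the density of $G_{d,m,N}$: one must steer the cumulative frequency vector so as to sit inside a prescribed complete interval $I_j$ whose length is dictated by $\mathcal{I}$, which is handled by working far out along the partition, where the cumulative averages have stabilized. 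Note that the only property of $\mathcal{I}$ used is its meagerness, via the interval partition, which indeed holds both for analytic P-ideals and for $F_\sigma$-ideals.
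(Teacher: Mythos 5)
Your argument is correct, but it is genuinely different from the paper's, and in fact it proves more. The paper treats the two hypotheses separately: for $F_\sigma$-ideals it quotes the identity $\Lambda_b^k(x,\mathcal{I})=\Gamma_b^k(x,\mathcal{I})$ from the Balcerzak--Leonetti reference and falls back on Theorem \ref{thm:main}; for analytic P-ideals it invokes Solecki's representation \eqref{eq:analyticPideal} by a lower semicontinuous submeasure $\varphi$, shows that the section $\mathfrak{u}(x,\cdot)$ is upper semicontinuous so that $\Lambda_b^k(x,\mathcal{I},\nicefrac{1}{2})$ is \emph{closed}, and then forces a countable dense subset of $\Delta_b^k$ into this closed set for a comeager set of $x$. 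You instead attack the non-closedness of the set of $\mathcal{I}$-limit points head on: for each target $\bm{\eta}$ you build a bespoke index set $M_{\bm{\eta}}$ which is a union of infinitely many full Talagrand intervals $I_{j_\ell}$ on which the frequency vectors lie in shrinking balls around $\bm{\eta}$; by the defining property of the partition, $M_{\bm{\eta}}\notin\mathcal{I}$, and the subsequence indexed by $M_{\bm{\eta}}$ converges to $\bm{\eta}$. This uses only the meagerness of $\mathcal{I}$, so your proof, if correct as I believe it is, establishes the conclusion for \emph{every} meager ideal --- in particular for all $F_{\sigma\delta}$-ideals such as the one generated by the upper Banach density, which would answer affirmatively the open question posed at the end of Section \ref{sec:main}; this striking consequence deserves a careful double-check, but I see no gap. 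Three small points to tighten: (i) the sets $G_{d,m,N}$ are unions of cylinders, which are half-open intervals in $(0,1]$ rather than open sets, so the correct statement is that each complement is nowhere dense, proved exactly as in the paper by exhibiting, inside any nonempty open set, a cylinder with nonempty interior contained in $G_{d,m,N}$; (ii) the density step should cite the realizability theorem of Olsen (Theorem 1 of the reference used in the proof of Theorem \ref{thm:main}) for the existence of $x^\star$ with $\lim_n\bm{\pi}^k_{b,n}(x^\star)=d$ after a prescribed prefix; (iii) your construction relies on the standard definition of $\mathcal{I}$-limit point, namely that the index set of the convergent subsequence is \emph{not} in $\mathcal{I}$ --- the condition $\mathbf{N}\setminus\{n_1,n_2,\ldots\}\in\mathcal{I}$ as literally written in Section \ref{sec:main} would force $\Lambda_b^k(x,\mathcal{I})$ to be at most a singleton and is evidently a misprint, so you are using the intended notion.
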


It is known that every $\mathcal{I}$-limit point is always an $\mathcal{I}$-cluster point, however they can be highly different, as it is shown in \cite[Theorem 3.1]{MR3883171}. 
This implies that Theorem \ref{thm:mainlimit} provides a further improvement on Theorem \ref{thm:main} for the subfamily of analytic P-ideals. 

It is remarkable that there exist $F_\sigma$-ideals which are not P-ideals, see e.g. \cite[Section 1.11]{MR1711328}. 
Also, the family of analytic P-ideals is well understood and has been characterized with the aid of lower semicontinuous submeasures, cf. Section \ref{sec:proofs}. 
The results in \cite{MR4124855} suggest that the study of the interplay between the theory of analytic P-ideals and their representability may have some relevant yet unexploited potential for the study of the geometry of Banach spaces.

Finally, recalling that the ideal $\mathcal{Z}$ defined in \eqref{eq:definitionZ} is an analytic P-ideal, an immediate consequence of Theorem \ref{thm:mainlimit} (as pointed out in the abstract) follows:
\begin{cor}\label{cor:statconv}
The set of $x \in (0,1]$ such that, for all $b\ge 2$ and $k\ge 1$, every vector in $\Delta_b^k$ is a statistical limit point of the sequence $(\bm{\pi}_{b,n}^k(x): n\ge 1)$ is comeager. 
\end{cor}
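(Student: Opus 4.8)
The plan is to specialize Theorem~\ref{thm:mainlimit} to the single ideal $\mathcal{I}=\mathcal{Z}$ and then unwind the terminology, so the argument is essentially a matter of checking hypotheses and translating notation. First I would record that the hypothesis of Theorem~\ref{thm:mainlimit} is met: the ideal $\mathcal{Z}$ of asymptotic density zero sets is an analytic P-ideal (this is classical, and is exactly the sort of ideal captured by the lower semicontinuous submeasure characterization recalled in Section~\ref{sec:proofs}). Hence Theorem~\ref{thm:mainlimit} applies verbatim with $\mathcal{I}=\mathcal{Z}$ and yields that the set
\[
A:=\{x \in (0,1]: \Lambda_b^k(x,\mathcal{Z})=\Delta_b^k \text{ for all } b\ge 2,\, k\ge 1\}
\]
is comeager.

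Next I would translate this into the language of the statement. By definition the $\mathcal{Z}$-limit points of a sequence are precisely its statistical limit points, so $\Lambda_b^k(x,\mathcal{Z})$ is exactly the set of statistical limit points of $(\bm{\pi}_{b,n}^k(x):n\ge 1)$. For every $x \in A$ we thus have $\Delta_b^k \subseteq \Lambda_b^k(x,\mathcal{Z})$ for all $b\ge 2$ and $k\ge 1$, which says precisely that every vector of $\Delta_b^k$ is a statistical limit point of $(\bm{\pi}_{b,n}^k(x):n\ge 1)$. Consequently $A$ is contained in the set described in the corollary; since any superset of a comeager set is comeager, the conclusion follows.

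There is essentially no obstacle beyond the verification that $\mathcal{Z}$ is an analytic P-ideal, which is standard; all of the substantive work already sits inside Theorem~\ref{thm:mainlimit}. It is worth noting that only the inclusion $\Delta_b^k \subseteq \Lambda_b^k(x,\mathcal{Z})$ is actually needed, and that the reverse inclusion $\Lambda_b^k(x,\mathcal{Z}) \subseteq \Delta_b^k$ holds automatically for every $x$: each $\mathcal{Z}$-limit point is a $\mathcal{Z}$-cluster point, each $\mathcal{Z}$-cluster point is an ordinary accumulation point (because $\mathrm{Fin}\subseteq \mathcal{Z}$ forces the relevant index sets to be infinite), and $\mathrm{L}_b^k(x)\subseteq \Delta_b^k$ as observed in the Introduction, giving the chain $\Lambda_b^k(x,\mathcal{Z})\subseteq \Gamma_b^k(x,\mathcal{Z})\subseteq \mathrm{L}_b^k(x)\subseteq \Delta_b^k$. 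Hence the phrasing of the corollary already records the full content of the equality $\Lambda_b^k(x,\mathcal{Z})=\Delta_b^k$ on the comeager set $A$.
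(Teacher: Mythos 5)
Your proposal is correct and matches the paper's argument exactly: the corollary is obtained by specializing Theorem~\ref{thm:mainlimit} to $\mathcal{I}=\mathcal{Z}$, which is an analytic P-ideal (via the lscsm $\varphi(A)=\sup_n \frac{1}{n}\#(A\cap[1,n])$ recalled in Section~\ref{sec:proofs}), and identifying $\mathcal{Z}$-limit points with statistical limit points. Your closing remark about the automatic inclusion $\Lambda_b^k(x,\mathcal{Z})\subseteq \Delta_b^k$ is a correct, if unneeded, bonus.
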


It would also be interesting to investigate to what extend the same results for nonnormal points belonging to self-similar fractals (as studied, e.g., by Olsen and West in \cite{MR4142282} in the context of iterated function systems) are valid. 

We leave as open question for the interested reader to check whether Theorem \ref{thm:mainlimit} can be extended for all $F_{\sigma\delta}$-ideals including, in particular, the ideal $\mathcal{I}$ generated by the upper Banach density (which is known to not be a P-ideal, see e.g. \cite[p.299]{MR632187}). 

\section{Proofs of the main results}\label{sec:proofs}

\begin{proof}[Proof of Theorem \ref{thm:main}]
Let $\mathcal{I}$ be a meager ideal on $\mathbf{N}$. It follows by Talagrand's characterization of meager ideals \cite[Theorem 21]{MR579439} that it is possible to define a partition $\{I_1,I_2,\ldots\}$ of $\mathbf{N}$ into nonempty finite subsets such that $S\notin \mathcal{I}$ whenever $I_n\subseteq S$ for infinitely many $n$. Moreover, we can assume without loss of generality that $\max I_n<\min I_{n+1}$ for all $n \in \mathbf{N}$. 

The claimed set can be rewritten as $\bigcap_{b\ge 2}\bigcap_{k\ge 1}X_b^k$, where $X_{b}^k:=\{x \in (0,1]: \Gamma_b^k (x,\mathcal{I})=\Delta_{b}^k\}$. 
Since the family of meager subsets of $(0,1]$ is a $\sigma$-ideal, it is enough to show that the complement 
of each $X_{b}^k$ is meager. 
To this aim, fix $b\ge 2$ and $k\ge 1$ and denote by $\|\cdot \|$ the Euclidean norm on $\mathbf{R}^{b^k}$. 
Considering that $\{\bm{\eta}_1, \bm{\eta}_2, \ldots\}:=\Delta_b^k \cap \mathbf{Q}^{b^k}$ is a countable dense subset of $\Delta_b^k$ and that $\Gamma_b^k(x,\mathcal{I})$ is a closed subset of $\Delta_b^k$ by \cite[Lemma 3.1(iv)]{MR3920799}, it follows that 
\begin{displaymath}
\begin{split}
(0,1]\setminus X_b^k
&= \bigcup\nolimits_{t\ge 1}\{x \in (0,1]: \bm{\eta}_t \notin \Gamma_b^k(x, \mathcal{I})\}\\
&= \bigcup\nolimits_{t\ge 1}\{x \in (0,1]: \exists \varepsilon>0, \{n \in \mathbf{N}: \|\bm{\pi}_{b,n}^k(x)-\bm{\eta}_t\|< \varepsilon\}\in \mathcal{I}\}\\
&\subseteq \bigcup\nolimits_{t, p, m\ge 1}
\{x \in (0,1]: \forall q\ge p, \exists n \in I_q, \|\bm{\pi}_{b,n}^k(x)-\bm{\eta}_t\|\ge\nicefrac{1}{m} \}.\\
\end{split}
\end{displaymath}
Denote by $S_{t,m,p}$ the set in the latter union. 
Thus 
it is sufficient to show that each $S_{t,p,m}$ is nowhere dense.  
To this aim, fix $t,p,m \in \mathbf{N}$ and a nonempty relatively open set $G\subseteq (0,1]$. We claim there exists a nonempty open set $U$ contained in $G$ and disjoint from $S_{t,p,m}$. Since $G$ is nonempty and open in $(0,1]$, there exists a string $\tilde{\bm{s}}=s_1\cdots s_j \in S_b^j$ such that $x \in G$ whenever $d_{b,i}(x)=s_i$ for all $i=1,\ldots,j$. Now, pick $x^\star \in (0,1]$ such that $\lim_n \bm{\pi}_{b,n}^k(x^\star)=\bm{\eta}_t$, which exists by \cite[Theorem 1]{MR2034019}. In addition, we can assume without loss of generality that $d_{b,i}(x^\star)=s_i$ for all $i=1,\ldots,j$. Since $\bm{\pi}_{b,n}^k(x^\star)$ is convergent to $\bm{\eta}_t$, there exists $q \ge p+j$ such that $\|\bm{\pi}^k_{b,n}(x^\star)-\bm{\eta}_{t} \|< \nicefrac{1}{m}$ for all $n \ge \min I_{q}$. 
Define $V:=\{x \in (0,1]: d_{b,i}(x)=d_{b,i}(x^\star) \text{ for all }i=1,\ldots,\max I_{q}+k\}$ and note that $V\subseteq G$ because $d_{b,i}(x)=s_i$ for all $i\le j$ and $x \in V$, and $V\cap S_{t,m,p}=\emptyset$ because, for each $x\in V$, the required property is not satisfied for this choice of $q$ since $\bm{\pi}_{b,n}^k(x)=\bm{\pi}_{b,n}^k(x^\star)$ for all $n \le \max I_q$. 
Clearly, $V$ has nonempty interior, hence it is possible to choose such $U\subseteq V$. 

This proves that each $S_{t,m,p}$ is nowhere dense, concluding the proof. 
\end{proof}

Before we proceed to the proof of Theorem \ref{thm:mainlimit}, we need to recall the classical Solecki's characterization of analytic P-ideals.  
A lower semicontinuous submeasure (in short, lscsm) is a monotone subadditive function $\varphi: \mathcal{P}(\mathbf{N}) \to [0,\infty]$ such that $\varphi(\emptyset)=0$, $\varphi(\{n\})<\infty$, and $\varphi(A)=\lim_m \varphi(A\cap [1,m])$ for all $A\subseteq \mathbf{N}$ and $n \in \mathbf{N}$. 
It follows by \cite[Theorem 3.1]{MR1708146} that an ideal $\mathcal{I}$ is an analytic P-ideal if and only if there exists a lscsm $\varphi$ such that
\begin{equation}\label{eq:analyticPideal}
\mathcal{I}=\{A\subseteq \mathbf{N}: \|A\|_\varphi=0\},
\,\,\, \|\mathbf{N}\|_\varphi =1,\,\,\,\text{ and }\,\,\varphi(\mathbf{N})<\infty.
\end{equation}
Here, $\|A\|_\varphi:=\lim_n \varphi(A\setminus [1,n])$ for all $A\subseteq \mathbf{N}$. Note that $\|A\|_\varphi=\|B\|_\varphi$ whenever the symmetric difference $A\bigtriangleup B$ is finite, cf. \cite[Lemma 1.3.3(b)]{MR1711328}. 
Easy examples of lscsms are $\varphi(A):=\# A$ or $\varphi(A):=\sup_n \frac{1}{n}\#(A \cap [1,n])$ for all $A\subseteq \mathbf{N}$ which lead, respectively, to the ideals $\mathrm{Fin}$ and $\mathcal{Z}$ through the representation \eqref{eq:analyticPideal}.  

\begin{proof}[Proof of Theorem \ref{thm:mainlimit}]
First, let us suppose that $\mathcal{I}$ is an $F_\sigma$-ideal. We obtain by \cite[Theorem 2.3]{MR3883171} that $\Lambda_b^k (x,\mathcal{I})=\Gamma_b^k (x,\mathcal{I})$ for each $b\ge 2$, $k\ge 1$, and $x \in (0,1]$. Therefore the claim follows by Theorem \ref{thm:main}. 

Then, we assume hereafter that $\mathcal{I}$ is an analytic P-ideal generated by a lscsm $\varphi$ as in \eqref{eq:analyticPideal}. Fix integers $b\ge 2$ and $k\ge 1$, and define the function
$$
\mathfrak{u}: (0,1]\times \Delta_b^k \to \mathbf{R}: (x,\bm{\eta}) \mapsto \lim_{t\to \infty} \|\{n \in \mathbf{N}: \|\bm{\pi}_{b,n}^k(x)-\bm{\eta}\|\le \nicefrac{1}{t} \}\|_\varphi 
$$
where $\|\cdot\|$ stands for the Euclidean norm on $\mathbf{R}^{b^k}$. It follows by \cite[Lemma 2.1]{MR3883171} that every section $\mathfrak{u}(x,\cdot)$ is upper semicontinuous, so that the set 
$$
\Lambda_b^k(x,\mathcal{I},q):=\{\bm{\eta} \in \Delta_b^k: \mathfrak{u}(x,\bm{\eta}) \ge q\}
$$
is closed for each $x \in (0,1]$ and $q \in \mathbf{R}$.

At this point, we prove that, for each $\bm{\eta} \in \Delta_b^k$, the set 
$
X(\bm{\eta}):=\{x \in (0,1]: \mathfrak{u}(x,\bm{\eta}) \ge \nicefrac{1}{2}\}
$ 
is comeager. To this aim, fix $\bm{\eta} \in \Delta_b^k$ and notice that 
\begin{displaymath}
\begin{split}
(0,1]\setminus X(\bm{\eta})
&=\bigcup\nolimits_{t\ge 1}\{x \in (0,1]: \|\{n \in \mathbf{N}: \|\bm{\pi}_{b,n}^k(x)-\bm{\eta}\|\le \nicefrac{1}{t}\}\|_\varphi < \nicefrac{1}{2}\}\\
&=\bigcup\nolimits_{t\ge 1}\{x \in (0,1]:  \lim_{h\to \infty} \varphi(\{n\ge h: \|\bm{\pi}_{b,n}^k(x)-\bm{\eta}\|\le \nicefrac{1}{t}\})<\nicefrac{1}{2}\}\\
&=\bigcup\nolimits_{t,h\ge 1}\{x \in (0,1]: \varphi(\{n\ge h: \|\bm{\pi}_{b,n}^k(x)-\bm{\eta}\|\le \nicefrac{1}{t}\})<\nicefrac{1}{2}\}.
\end{split}
\end{displaymath}
Denoting by $Y_{t,h}$ the inner set above, it is sufficient to show that each $Y_{t,h}$ is nowhere dense. 
Hence, fix $G\subseteq (0,1]$, $\tilde{\bm{s}} \in S_b^j$, and $x^\star \in (0,1]$ as in the proof of Theorem \ref{thm:main}. 
Considering that $\|\cdot\|_\varphi$ is invariant under finite sets, it follows that
\begin{displaymath}
\varphi(\{n\ge j^\prime: \|\bm{\pi}_{b,n}^k(x^\star)-\bm{\eta}\|\le \nicefrac{1}{t}\})
\ge \|\{n\ge j^\prime: \|\bm{\pi}_{b,n}^k(x^\star)-\bm{\eta}\|\le \nicefrac{1}{t}\}\|_\varphi=\mathfrak{u}(x^\star, \bm{\eta})=1, 
\end{displaymath}
where $j^\prime:=j+h$. Since $\varphi$ is lower semicontinuous, there exists an integer $j^{\prime\prime}>j^\prime$ such that 
$$
\varphi(\{n\in [j^\prime, j^{\prime\prime}]: \|\bm{\pi}_{b,n}^k(x^\star)-\bm{\eta}\|\le \nicefrac{1}{t}\})\ge \nicefrac{1}{2}.
$$ 
Define $V:=\{x \in (0,1]: d_{b,i}(x)=d_{b,i}(x^\star) \text{ for all }i=1,\ldots,j^{\prime\prime}\}$. Similarly, note that $V\subseteq G$ because $d_{b,i}(x)=s_i$ for all $i\le j$ and $x \in V$, and $V \cap Y_{t,h}=\emptyset$ because $\varphi(\{n\ge h: \|\bm{\pi}_{b,n}^k(x)-\bm{\eta}\|\le \nicefrac{1}{t}\})$ is at least $\varphi(\{n\in [j^\prime, j^{\prime\prime}]: \|\bm{\pi}_{b,n}^k(x)-\bm{\eta}\|\le \nicefrac{1}{t}\})\ge \nicefrac{1}{2}$ for all $x \in V$. 
Since $V$ has nonempty interior, it is possible to choose $U\subseteq V$ with the required property.

Finally, let $E$ be a countable dense subset of $\Delta_b^k$. 
Considering that $X:=\{x \in (0,1]: E\subseteq \Lambda_b^k(x,\mathcal{I},\nicefrac{1}{2})\}$ is equal to $\bigcap_{\bm{\eta} \in E}X(\bm{\eta})$, it follows that the set 
$
X
$ 
is comeager. However, considering that 
$$
\Lambda_b^k(x,\mathcal{I})=\bigcup\nolimits_{q>0}\Lambda_b^k(x,\mathcal{I},q)
$$
by \cite[Theorem 2.2]{MR3883171} and that $\Lambda_b^k(x,\mathcal{I},\nicefrac{1}{2})$ is a closed subset such that $E\subseteq \Lambda_b^k(x,\mathcal{I},\nicefrac{1}{2})\subseteq \Lambda_b^k(x,\mathcal{I})\subseteq \Delta_b^k$ for all $x \in X$, we obtain that $\Lambda_b^k(x,\mathcal{I}, \nicefrac{1}{2})=\Lambda_b^k(x,\mathcal{I})=\Delta_b^k$ for all $x \in X$. In particular, the claimed set contains $X$, which is comeager. This concludes the proof. 
\end{proof}

\section{Applications}

\subsection{Hausdorff and packing dimensions} 
We refer to \cite[Chapter 3]{MR3236784} for the definitions of the Hausdorff dimension and the packing dimension.
\begin{prop}\label{prop:hausdpack} 
The sets defined in Theorem \ref{thm:main} and Theorem \ref{thm:mainlimit} have Hausdorff dimension $0$ and packing dimension $1$.
\end{prop}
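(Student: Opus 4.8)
The plan is to sandwich both comeager sets between a metrically small set (to get the Hausdorff dimension) and to exploit their topological largeness directly (to get the packing dimension). Set $A:=\{x \in (0,1]: \liminf_n \pi_{2,1,n}(x)=0\}$, where $\pi_{2,1,n}(x)$ is the frequency of the digit $1$ among the first $n$ binary digits of $x$. Since every $\mathcal{I}$-limit point is an $\mathcal{I}$-cluster point, and every $\mathcal{I}$-cluster point is an ordinary accumulation point (because $\mathrm{Fin}\subseteq \mathcal{I}$), both sets from Theorem \ref{thm:main} and Theorem \ref{thm:mainlimit} are contained in $\{x \in (0,1]: \Delta_2^1 \subseteq \mathrm{L}_2^1(x)\}$. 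In particular, for each such $x$ the vertex of $\Delta_2^1$ at which the frequency of the digit $1$ vanishes is an accumulation point of the sequence $(\bm{\pi}_{2,n}^1(x):n\ge 1)$, which forces $\liminf_n \pi_{2,1,n}(x)=0$; hence both sets lie inside $A$. It therefore suffices to prove that $\dim_{\mathrm{H}}A=0$ and that every comeager subset of $(0,1]$ has packing dimension $1$.

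For the first task I would run an entropy-type covering estimate. Fix $\varepsilon \in (0,\nicefrac{1}{2})$ and let $H(\varepsilon):=-\varepsilon\log_2\varepsilon-(1-\varepsilon)\log_2(1-\varepsilon)$ be the binary entropy. For each $n$, the set of $x$ whose first $n$ binary digits contain fewer than $\varepsilon n$ occurrences of $1$ is a union of at most $\sum_{j<\varepsilon n}\binom{n}{j}\le 2^{nH(\varepsilon)}$ dyadic intervals of length $2^{-n}$. Since each $x \in A$ satisfies this condition for infinitely many $n$, the set $A$ is covered, for every $N$, by the corresponding intervals with $n\ge N$; summing $\sum_{n\ge N}2^{nH(\varepsilon)}(2^{-n})^s=\sum_{n\ge N}2^{-n(s-H(\varepsilon))}$ shows that $\mathcal{H}^s(A)=0$ whenever $s>H(\varepsilon)$. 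Thus $\dim_{\mathrm{H}}A\le H(\varepsilon)$ for every $\varepsilon$, and letting $\varepsilon\to 0^+$ yields $\dim_{\mathrm{H}}A=0$.

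For the packing dimension I would establish the general principle that a comeager set $E\subseteq (0,1]$ satisfies $\dim_{\mathrm{P}}E=1$, and then apply it to both sets, which are comeager by Theorems \ref{thm:main} and \ref{thm:mainlimit}. Recall that $\dim_{\mathrm{P}}E=\inf\{\sup_i \overline{\dim}_{\mathrm{B}}E_i: E=\bigcup_{i\ge 1} E_i\}$, the infimum over countable decompositions, where $\overline{\dim}_{\mathrm{B}}$ is the upper box-counting dimension, which is monotone and invariant under closure. Given any such decomposition, $\bigcup_i \overline{E_i}\supseteq E$ is comeager, hence non-meager; since $(0,1]$ is a Baire space, some $\overline{E_i}$ is non-meager and, being closed, has nonempty interior, so it contains a nondegenerate interval $J$. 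Then $\overline{\dim}_{\mathrm{B}}E_i=\overline{\dim}_{\mathrm{B}}\overline{E_i}\ge \overline{\dim}_{\mathrm{B}}J=1$, whence $\sup_i \overline{\dim}_{\mathrm{B}}E_i=1$. As the decomposition was arbitrary, $\dim_{\mathrm{P}}E\ge 1$, and the reverse inequality is trivial.

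The two halves are consistent because $\dim_{\mathrm{H}}\le \dim_{\mathrm{P}}$ always, so a gap of $0$ versus $1$ is the extremal possibility. The genuinely delicate point is the packing-dimension principle: it is precisely the \emph{modified} (rather than plain) box-counting definition of $\dim_{\mathrm{P}}$, combined with the closure-invariance of $\overline{\dim}_{\mathrm{B}}$ and a Baire-category selection of a ``fat'' piece inside every decomposition, that converts topological largeness into full packing dimension. The Hausdorff part, by contrast, reduces to a single base-$2$ digit statistic and amounts to a routine entropy bound.
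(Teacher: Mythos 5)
Your proposal is correct and follows the same two-pronged strategy as the paper (a containment argument for the Hausdorff dimension, comeagerness for the packing dimension), but where the paper disposes of both halves by citation, you prove them from scratch, and your Hausdorff half uses a genuinely different and weaker containment. The paper's proof observes that the sets are contained in the corresponding sets for $\mathcal{I}=\mathrm{Fin}$, which have Hausdorff dimension $0$ by \cite[Theorem 2.1]{MR2034019}, and then invokes \cite[Corollary 3.10(b)]{MR3236784} for the packing dimension of comeager sets. You instead embed both sets in the much larger set $A=\{x\in(0,1]:\liminf_n\pi_{2,1,n}(x)=0\}$ --- using only that the single vertex of $\Delta_2^1$ with vanishing frequency of the digit $1$ must be an ordinary accumulation point, since $\mathcal{I}$-limit points are $\mathcal{I}$-cluster points and $\mathrm{Fin}\subseteq\mathcal{I}$ --- and you verify $\dim_{\mathrm{H}}A=0$ directly via the entropy bound $\sum_{j<\varepsilon n}\binom{n}{j}\le 2^{nH(\varepsilon)}$ and a covering by dyadic intervals of generation $n\ge N$; this is sound, and it shows that a single prescribed cluster point in base $2$ already forces Hausdorff dimension $0$. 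For the packing dimension you reprove the Baire-category fact behind Falconer's corollary from the modified box-counting definition: given any countable decomposition $E=\bigcup_i E_i$ of a comeager set, some closed $\overline{E_i}$ is non-meager, hence contains an interval, and the closure-invariance and monotonicity of $\overline{\dim}_{\mathrm{B}}$ give $\sup_i\overline{\dim}_{\mathrm{B}}E_i=1$. Both halves are complete; what your route buys is self-containedness and a cleaner, stronger Hausdorff containment, at the cost of length compared with the paper's two-sentence argument.
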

\begin{proof}
Reasoning as in \cite{MR2075041}, the claimed sets are contained in the corresponding ones with ideal $\mathrm{Fin}$, which have Hausdorff dimension $0$ by \cite[Theorem 2.1]{MR2034019}. In addition, since all sets are comeager, we conclude that they have packing dimension $1$ by  \cite[Corollary 3.10(b)]{MR3236784}. 
\end{proof}

\subsection{Regular matrices} 
We extend the main results contained in \cite{MR2672291, Stylianou}. To this aim, let $A=(a_{n,i}: n,i \in \mathbf{N})$ be a \emph{regular matrix}, that is, an infinite real-valued matrix such that, if $\bm{z}=(\bm{z}_n)$ is a $\mathbf{R}^d$-valued sequence convergent to $\bm{\eta}$, then $A_n\bm{z}:=\sum_i a_{n,i}\bm{z}_i$ exists for all $n \in \mathbf{N}$ and 
$\lim_n A_n\bm{z}=\bm{\eta}$, 
see e.g. 
\cite[Chapter 4]{MR0193472}. 
Then, for each $x \in (0,1]$ and integers $b\ge 2$ and $k\ge 1$, let $\Gamma_{b}^k(x,\mathcal{I},A)$ be the set of $\mathcal{I}$-cluster points of the sequence of vectors $\left(A_n\bm{\pi}_{b}^k(x): n \ge 1\right)$, 
where $\bm{\pi}_{b}^k(x)$ is the sequence $(\bm{\pi}_{b,n}^k(x): n\ge 1)$. 

In particular, $\Gamma_{b}^k(x,\mathcal{I},A)=\Gamma_{b}^k(x,\mathcal{I})$ if $A$ is the infinite identity matrix. 

\begin{thm}\label{cor:regular}
The set 
$\{x \in (0,1]: \Gamma_b^k (x,\mathcal{I},A)\supseteq \Delta_{b}^k \text{ for all }b\ge 2, k\ge 1\}$ is comeager, provided that $\mathcal{I}$ is a meager ideal and $A$ is a regular matrix. 
\end{thm}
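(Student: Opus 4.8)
The plan is to mimic the proof of Theorem~\ref{thm:main} as closely as possible, adapting each step to account for the regular matrix $A$. The key structural observation is that we now need only a one-sided inclusion $\Gamma_b^k(x,\mathcal{I},A)\supseteq \Delta_b^k$, which is actually what was really proved in Theorem~\ref{thm:main} as well (the reverse inclusion there being automatic from $\bm{\pi}_{b,n}^k(x)\in\Delta_b^k$, but here $A_n\bm{\pi}_b^k(x)$ need not lie in $\Delta_b^k$, so we simply drop the reverse inclusion). As before, invoke Talagrand's characterization to fix a partition $\{I_1,I_2,\ldots\}$ of $\mathbf{N}$ into nonempty finite blocks with $\max I_n<\min I_{n+1}$ such that any set containing infinitely many blocks $I_n$ is not in $\mathcal{I}$. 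Writing the target set as $\bigcap_{b\ge 2}\bigcap_{k\ge 1}X_b^k$ with $X_b^k:=\{x:\Delta_b^k\subseteq \Gamma_b^k(x,\mathcal{I},A)\}$ and using that meager sets form a $\sigma$-ideal, it suffices to show each complement $(0,1]\setminus X_b^k$ is meager. Fixing a countable dense set $\{\bm\eta_1,\bm\eta_2,\ldots\}$ of $\Delta_b^k$, decompose the complement exactly as in the earlier proof into a countable union of sets $S_{t,p,m}=\{x:\forall q\ge p,\exists n\in I_q,\ \|A_n\bm\pi_b^k(x)-\bm\eta_t\|\ge 1/m\}$, and reduce to proving each $S_{t,p,m}$ is nowhere dense.

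For the nowhere-density argument, fix $t,p,m$ and a nonempty relatively open $G\subseteq(0,1]$, determined by a prescribed initial string $\tilde{\bm s}=s_1\cdots s_j\in S_b^j$. The first substantive point is to produce a point $x^\star\in(0,1]$ with the prescribed initial digits such that $\lim_n A_n\bm\pi_b^k(x^\star)=\bm\eta_t$. This is where regularity of $A$ enters: by \cite[Theorem 1]{MR2034019} one can choose $x^\star$ with $\lim_n \bm\pi_{b,n}^k(x^\star)=\bm\eta_t$, and then regularity of $A$ (applied coordinatewise to the $\mathbf{R}^{b^k}$-valued convergent sequence $\bm\pi_b^k(x^\star)$) guarantees $A_n\bm\pi_b^k(x^\star)$ exists for all $n$ and converges to the same limit $\bm\eta_t$. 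As before we may also arrange $d_{b,i}(x^\star)=s_i$ for $i=1,\ldots,j$.

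The main obstacle is that, unlike the identity-matrix case, the value $A_n\bm\pi_b^k(x)$ depends on \emph{all} the coordinates $\bm\pi_{b,i}^k(x)$ (equivalently, on all digits of $x$), not just on finitely many. In Theorem~\ref{thm:main} the equality $\bm\pi_{b,n}^k(x)=\bm\pi_{b,n}^k(x^\star)$ for $n\le \max I_q$ followed from agreement of finitely many digits; here, since each row of $A$ may have infinitely many nonzero entries, fixing finitely many initial digits does not pin down $A_n\bm\pi_b^k(x)$ exactly. The resolution is to use the convergence $\lim_n A_n\bm\pi_b^k(x^\star)=\bm\eta_t$ directly: choose $q\ge p+j$ with $\|A_n\bm\pi_b^k(x^\star)-\bm\eta_t\|<1/m$ for all $n\ge\min I_q$, and then, since for fixed $n$ the partial sum defining $A_n\bm z$ converges as we add coordinates, approximate each relevant row by a finite truncation. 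Concretely, one selects an integer $N$ large enough that the tail contributions $\sum_{i>N}a_{n,i}\bm\pi_{b,i}^k$ are uniformly negligible for the finitely many indices $n$ ranging over a block $I_q$ (using that the $\bm\pi_{b,i}^k$ are bounded and the row tails of a regular matrix are controlled), and then defines $V:=\{x:d_{b,i}(x)=d_{b,i}(x^\star)\text{ for all }i=1,\ldots,N+k\}$. For $x\in V$ the first $N$ coordinates of $\bm\pi_b^k(x)$ agree with those of $x^\star$, so $\|A_n\bm\pi_b^k(x)-A_n\bm\pi_b^k(x^\star)\|$ is small for $n\in I_q$; choosing the truncation fine enough keeps $\|A_n\bm\pi_b^k(x)-\bm\eta_t\|<1/m$ for every $n\in I_q$. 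Hence no $x\in V$ satisfies the defining condition of $S_{t,p,m}$ for this $q$, giving $V\cap S_{t,p,m}=\emptyset$, while $V\subseteq G$ and $V$ has nonempty interior. Choosing $U\subseteq V$ open and nonempty completes the proof that $S_{t,p,m}$ is nowhere dense, and therefore that the claimed set is comeager.
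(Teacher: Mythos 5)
Your proposal is correct and follows essentially the same route as the paper: the same Talagrand partition and decomposition into sets $S_{t,p,m}$, the same choice of $x^\star$ via \cite[Theorem 1]{MR2034019} with regularity transporting the limit through $A$, and the same key truncation step (your $N$ is the paper's $j_A$), controlling the row tails $\sum_{i>N}|a_{n,i}|$ uniformly over the finitely many $n\in I_q$ so that fixing finitely many digits forces $\|A_n\bm{\pi}_b^k(x)-\bm{\eta}_t\|<\nicefrac{1}{m}$ on the whole block.
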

\begin{proof} 
Fix a regular matrix $A=(a_{n,i})$ and a meager ideal $\mathcal{I}$. The proof goes along the same lines as the proof of Theorem \ref{thm:main},  
replacing the definition of $S_{t,m,p}$ with 
$$
S^\prime_{t,m,p}:=
\{x \in (0,1]: 
\forall q\ge p, \exists n \in I_q, 
\|A_n\bm{\pi}^k_{b}(x)-\bm{\eta}_{t} \|\ge \nicefrac{1}{m} \}. 
$$
Recall that, thanks to the classical Silverman--Toeplitz characterization of regular matrices, see e.g. 
\cite[Theorem 4.1, II]{MR0193472} or \cite{LeoCon21}, 
we have that $\sup_n \sum_i |a_{n,i}|<\infty$. 
Since $\lim_n \bm{\pi}_{b,n}^k(x^\star)=\bm{\eta}_t$, it follows that there exist sufficiently large integers $q\ge p+j$ and $j_A \ge j$ such that, if $d_{b,i}(x)=d_{b,i}(x^\star)$ for all $i=1,\ldots,j_A+k$, then 
\begin{equation}\label{eq:inequalityregularanalogue}
\begin{split}
\|A_n\bm{\pi}^k_{b}(x)-\bm{\eta}_{t} \|
&\le \|A_n\bm{\pi}^k_{b, }(x^\star)-\bm{\eta}_{t}\|+\left\|\sum\nolimits_i a_{n,i}(\bm{\pi}^k_{b,i}(x)-\bm{\pi}^k_{b,i}(x^\star))\right\|\\
&\le \|A_n\bm{\pi}^k_{b }(x^\star)-\bm{\eta}_{t} \|+\sum\nolimits_i |a_{n,i}| \, \|\bm{\pi}^k_{b,i}(x)-\bm{\pi}^k_{b,i}(x^\star)\|\\
&\le \|A_n\bm{\pi}^k_{b}(x^\star)-\bm{\eta}_{t}\|+\sum\nolimits_{i> j_A} |a_{n,i}| 
< \frac{1}{m}
\end{split}
\end{equation}
for all $n \in I_{q}$. We conclude analogously that $S^\prime_{t,m,p}$ is nowhere dense. 
\end{proof}

The main result in \cite{Stylianou} corresponds to the case $\mathcal{I}=\mathrm{Fin}$ and $k=1$, although with a different proof; cf. also Example \ref{example10} below.  

At this point, we need an intermediate result which is of independent interest. For each bounded sequence $\bm{x}=(\bm{x}_n)$ with values in $\mathbf{R}^k$, let $\mathrm{K}\text{-}\mathrm{core}(\bm{x})$ be the \emph{Knopp core} of $\bm{x}$, that is, the convex hull of the set of accumulation points of $\bm{x}$. In other words, $\mathrm{K}\text{-}\mathrm{core}(\bm{x})=\mathrm{co}\, \mathrm{L}_{\bm{x}}$, where $\mathrm{co}\, S$ is the convex hull of  $S\subseteq \mathbf{R}^k$ and $\mathrm{L}_{\bm{x}}$ is the set of accumulation points of $\bm{x}$. 
The ideal version of the Knopp core has been studied in \cite{LeoJMAA20, MR4126774}.
The classical Knopp theorem states that, if $k=2$ and $A$ is a nonnegative regular matrix, then 
\begin{equation}\label{eq:inclusioncoresFin}
\mathrm{K}\text{-}\mathrm{core}(A\bm{x})\subseteq \mathrm{K}\text{-}\mathrm{core}(\bm{x})
\end{equation}
for all bounded sequences $\bm{x}$, where $A\bm{x}=(A_n\bm{x}: n\ge 1)$, see \cite[p. 115]{MR1545100}; cf. \cite[Chapter 6]{MR0193472} for a textbook exposition. A generalization in the case $k=1$ can be found in \cite{MR549529}. 
We show, in particular, that a stronger version of Knopp's theorem holds for every $k \in \mathbf{N}$. 
\begin{prop}\label{prop:inclusioncores}
Let $\bm{x}=(\bm{x}_n)$ be a bounded sequence taking values in $\mathbf{R}^k$, and fix a regular matrix $A$ such that $\lim_n \sum_i |a_{n,i}|=1$. Then inclusion \eqref{eq:inclusioncoresFin} holds. 
\end{prop}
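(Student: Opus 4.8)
The plan is to show that every accumulation point of the transformed sequence $A\bm{x}$ lies in $\mathrm{K}\text{-}\mathrm{core}(\bm{x})=\mathrm{co}\,\mathrm{L}_{\bm{x}}$; since $\mathrm{K}\text{-}\mathrm{core}(A\bm{x})$ is by definition the convex hull of the accumulation points of $A\bm{x}$, and since $\mathrm{co}\,\mathrm{L}_{\bm{x}}$ is itself convex, this containment of accumulation points immediately yields the inclusion of the full convex hulls. So the real work is a pointwise statement: if $A_{n_r}\bm{x}\to \bm{y}$ along a subsequence, then $\bm{y}\in \mathrm{co}\,\mathrm{L}_{\bm{x}}$.

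First I would reduce to a compact convex target. Let $K:=\mathrm{co}\,\mathrm{L}_{\bm{x}}$, which is a compact convex subset of $\mathbf{R}^k$ (compact because $\bm{x}$ is bounded, so $\mathrm{L}_{\bm{x}}$ is compact, and the convex hull of a compact set in $\mathbf{R}^k$ is compact by Carathéodory). The natural tool is the separating hyperplane theorem: it suffices to show that for every linear functional $\bm\phi\in\mathbf{R}^k$ one has $\langle\bm\phi,\bm{y}\rangle\le \sup_{\bm{z}\in K}\langle\bm\phi,\bm{z}\rangle=:h_K(\bm\phi)$, the support function of $K$. Equivalently, writing $c_n:=\langle\bm\phi,\bm{x}_n\rangle$, I must control $\langle\bm\phi,A_n\bm{x}\rangle=\sum_i a_{n,i}c_i$ in terms of $\limsup_n c_n$, because $h_K(\bm\phi)=\limsup_n c_n$ (the support value of the convex hull of accumulation points equals the $\limsup$ of the scalar sequence). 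This is exactly the scalar Knopp-core inequality, so the heart of the matter is the case $k=1$.

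For the scalar estimate I would proceed directly from the Silverman--Toeplitz conditions, invoking the extra hypothesis $\lim_n\sum_i|a_{n,i}|=1$. Fix $\varepsilon>0$ and let $L:=\limsup_i c_i$; choose $N$ with $c_i\le L+\varepsilon$ for all $i>N$. Split the sum $\sum_i a_{n,i}c_i$ into the tail $i>N$ and the finite head $i\le N$. On the tail, bound $\sum_{i>N}a_{n,i}c_i\le \sum_{i>N}a_{n,i}^{+}(L+\varepsilon)+\sum_{i>N}a_{n,i}^{-}\sup_i|c_i|$; the finite head vanishes as $n\to\infty$ because each column sum $a_{n,i}\to 0$. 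The key point is that $\sum_i|a_{n,i}|\to 1$ together with $\sum_i a_{n,i}\to 1$ forces $\sum_i a_{n,i}^{-}\to 0$ (since $\sum_i a_{n,i}^{-}=\tfrac12(\sum_i|a_{n,i}|-\sum_i a_{n,i})$) and hence $\sum_i a_{n,i}^{+}\to 1$. Passing to $\limsup_n$ and then letting $\varepsilon\to 0$ gives $\limsup_n\sum_i a_{n,i}c_i\le L$, which is the desired inequality.

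The main obstacle, and the place where the hypothesis $\lim_n\sum_i|a_{n,i}|=1$ is genuinely used, is controlling the negative part of the matrix entries: without it the mass $\sum_i a_{n,i}^{-}$ need not vanish, and the tail estimate would pick up an uncontrolled contribution $\sum_{i>N}a_{n,i}^{-}\sup_i|c_i|$ that could push $A_n\bm{x}$ outside $K$. This is precisely why the classical Knopp theorem \eqref{eq:inclusioncoresFin} is stated for \emph{nonnegative} matrices; the condition $\lim_n\sum_i|a_{n,i}|=1$ is the exact relaxation that kills the negative mass asymptotically and lets the nonnegative argument go through. Once the scalar inequality is established for every direction $\bm\phi$, the vector conclusion $\bm{y}\in K$ follows from the fact that a point lies in a closed convex set iff it satisfies all supporting half-space inequalities, and I would close by remarking that the whole argument applies verbatim to any accumulation point of $A\bm{x}$, giving $\mathrm{K}\text{-}\mathrm{core}(A\bm{x})\subseteq K$.
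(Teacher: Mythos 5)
Your argument is correct, but it takes a genuinely different route from the paper. The paper works metrically: it fixes an accumulation point $\bm{\eta}$ of $A\bm{x}$, introduces the nested compact convex sets $K_m:=\overline{\mathrm{co}}\{\bm{x}_m,\bm{x}_{m+1},\ldots\}$, and bounds $d(A_n\bm{x},K)$ by $d(A_n\bm{x},K_m)+\sup_{\bm{y}\in K_m}d(\bm{y},K)$; the second term is made small by choosing $m$ so that all tail terms are $\varepsilon$-close to $\mathrm{L}_{\bm{x}}$, and the first is controlled by a three-term decomposition $\alpha_n+\beta_n+\gamma_n$ built from nearest-point projections onto $K_m$ after normalizing $A_n\bm{x}$ by $\sum_i a_{n,i}$, first for nonnegative $A$ and then for general $A$ by comparison with $B=(|a_{n,i}|)$. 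You instead dualize: you reduce the vector statement to the scalar inequality $\limsup_n\sum_i a_{n,i}c_i\le\limsup_i c_i$ via support functions and the identity $h_K(\bm{\phi})=\limsup_n\langle\bm{\phi},\bm{x}_n\rangle$ (which is valid here because bounded sequences in $\mathbf{R}^k$ have compact closure, so the limsup in each direction is attained at an accumulation point), and then prove the scalar inequality by the classical head/tail split, using $\sum_i a_{n,i}^{-}=\tfrac12(\sum_i|a_{n,i}|-\sum_i a_{n,i})\to 0$ to kill the negative mass. Your route is more elementary and arguably more transparent: it isolates exactly where the hypothesis $\lim_n\sum_i|a_{n,i}|=1$ enters, and it dispenses with the projection maps $Q_m$ and the normalization bookkeeping of $\alpha_n,\beta_n,\gamma_n$. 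What the paper's approach buys in exchange is that it is phrased entirely in terms of distances to convex sets, which is what lets the authors remark afterwards that the result transfers to (infinite-dimensional) Hilbert spaces once $\mathrm{K}\text{-}\mathrm{core}$ is replaced by the closed convex hull and the sequence is assumed relatively compact; your support-function identity would need the same relative-compactness hypothesis there, so the generality gap is minor. Two small points you should make explicit in a write-up: the series $A_n\bm{x}$ converges absolutely because $\sup_n\sum_i|a_{n,i}|<\infty$ by Silverman--Toeplitz, and the final step ($\langle\bm{\phi},\bm{y}\rangle\le h_K(\bm{\phi})$ for all $\bm{\phi}$ implies $\bm{y}\in K$) uses that $K=\mathrm{co}\,\mathrm{L}_{\bm{x}}$ is closed, which you correctly justified via Carath\'eodory.
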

\begin{proof} 
Define $\kappa:=\sup_n \|\bm{x}_n\|$ and let $\bm{\eta}$ be an accumulation point of $A\bm{x}$. 
It is sufficient to show that $\bm{\eta}\in K:=\mathrm{K}\text{-}\mathrm{core}(\bm{x})$. Possibly deleting some rows of $A$, we can assume without loss of generality that $\lim A\bm{x}=\bm{\eta}$. For each $m \in \mathbf{N}$, let $K_m$ be the closure of $\mathrm{co}\{x_m,x_{m+1},\ldots\}$, hence $K\subseteq K_m$. 
Define $d(\bm{a}, C):=\min_{\bm{b} \in C} \|\bm{a}-\bm{b}\|$ for all $\bm{a} \in \mathbf{R}^k$ and nonempty compact sets $C\subseteq \mathbf{R}^k$. 
In addition, for each $m \in \mathbf{N}$, let $Q_m(\bm{a})\in K_m$ be the unique vector such that $d(\bm{a}, K_m)=\|\bm{a}-Q_m(\bm{a})\|$. Similarly, let $Q(\bm{a})$ be the vector in $K$ which minimizes its distance with $\bm{a}$. 
Then, notice that, for all $n,m \in \mathbf{N}$, we have 
\begin{displaymath}
\begin{split}
d(A_n\bm{x}, K) &
\le \inf\nolimits_{\bm{b} \in K} \inf\nolimits_{\bm{c} \in \mathbf{R}^k} (\|A_n\bm{x}-\bm{c}\|+\|\bm{c}-\bm{b}\|) \\
&\le \inf\nolimits_{\bm{c} \in K_m} \inf\nolimits_{\bm{b} \in K}  (\|A_n\bm{x}-\bm{c}\|+\|\bm{c}-\bm{b}\|) \\
&\le \inf\nolimits_{\bm{c} \in K_m} \|A_n\bm{x}-\bm{c}\|+\sup\nolimits_{\bm{y}\in K_m} \inf\nolimits_{\bm{b} \in K}\|\bm{y}-\bm{b}\| \\
&= d(A_n\bm{x},K_m)+\sup\nolimits_{\bm{y}\in K_m}d(\bm{y},K)
\end{split}
\end{displaymath}
Since $d(\bm{\eta},K)=\lim_n d(A_n\bm{x}, K)$ 
by the continuity of $d(\cdot,K)$, 
it is sufficient to show that both $d(A_n\bm{x}, K_m)$ and $\sup_{\bm{y} \in K_m} d(\bm{y},K)$ are sufficiently small if $n$ is sufficiently large and $m$ is chosen properly. 

To this aim, fix $\varepsilon >0$ and choose $m \in \mathbf{N}$ such that $\sup\nolimits_{\bm{y}\in K_m}d(\bm{y},K)\le \nicefrac{\varepsilon}{2}$. 
Indeed, it is sufficient to choose $m \in \mathbf{N}$ such that $d(\bm{x}_n, \mathrm{L}_{\bm{x}})< \nicefrac{\varepsilon}{2}$ for all $n\ge m$: 
indeed, in the opposite, the subsequence $(\bm{x}_j)_{j \in J}$, where $J:=\{n\in \mathbf{N}: d(\bm{x}_n, \mathrm{L}_{\bm{x}})\ge  \nicefrac{\varepsilon}{2}\}$, would be bounded and without any accumulation point, which is impossible. 
Now pick $\bm{y} \in K_m$ so that $\bm{y}=\sum_j \lambda_{i_j} \bm{x}_{i_j}$ for some strictly increasing sequence $(i_j)$ of positive integers such that $i_1 \ge m$ and some real nonnegative sequence $(\lambda_{i_j})$ with $\sum_{j} \lambda_{i_j}=1$. It follows that 
$$
d(\bm{y},K)\le \left\|\bm{y}-\sum\nolimits_{j}\lambda_{i_j}Q(\bm{x}_{i_j}) \right\|
\le \sum\nolimits_{j}\lambda_{i_j} \left\|\bm{x}_{i_j}-Q(\bm{x}_{i_j})\right\|
\le  \sum\nolimits_{j}\lambda_{i_j} d(\bm{x}_{i_j}, L_{\bm{x}}) 
\le \frac{\varepsilon}{2}.
$$

Suppose for the moment that $A$ has nonnegative entries. 
Since $A$ is regular, we get $\lim_n \sum_i a_{n,i}=1$ and $\lim_n \sum_{i<m}a_{n,i}=0$ by the Silverman--Toeplitz characterization, hence $\lim_n\sum_{i\ge m} a_{n,i}=1$ and there exists $n_0 \in \mathbf{N}$ such that $\sum_{i\ge m} a_{n,i} \ge \nicefrac{1}{2}$ for all $n\ge n_0$. Thus, for each $n\ge n_0$, we obtain that 
$
 d(A_n\bm{x},K_m)=\|A_n\bm{x}-Q_m(A_n\bm{x})\| \le \alpha_n+\beta_n+\gamma_n,
$ 
where 
$$
\alpha_n:=\left\| A_n\bm{x}-\frac{A_n\bm{x}}{\sum_i a_{n,i}}\right\|, \quad \beta_n:=\left\| \frac{A_n\bm{x}}{\sum_i a_{n,i}}-Q_m\left(\frac{A_n\bm{x}}{\sum_i a_{n,i}}\right)\right\|, 
$$
and 
$$
\gamma_n:=\left\| Q_m\left(\frac{A_n\bm{x}}{\sum_i a_{n,i}}\right)-Q_m(A_n\bm{x})\right\|.
$$
Recalling that $\kappa=\sup_n \|\bm{x}_n\|$, it is easy to see that
\begin{displaymath}
\gamma_n \le \alpha_n\le \kappa \sum\nolimits_i |a_{n,i}| \cdot \left(1-\frac{1}{\sum\nolimits_i a_{n,i}}\right).
\end{displaymath}
In addition, setting $t_n:=\sum_{i\ge m}a_{n,i}/\sum_i a_{n,i} \in [0,1]$ for all $n\ge n_0$, we get
\begin{equation}\label{eq:inequalitybeta}
\begin{split}
\beta_n
&\le \left\| \frac{\sum_i^\star}{\sum_i a_{n,i}}-\frac{\sum_{i\ge m}^\star}{\sum_{i\ge m} a_{n,i}}\right\|\\
&=\frac{1}{\sum\nolimits_{i\ge m}a_{n,i}\sum\nolimits_{i}a_{n,i}}\left\| \sum\nolimits_{i\ge m}a_{n,i}\left(\sum\nolimits_{i< m}^\star+\sum\nolimits_{i\ge m}^\star \right)
-\sum\nolimits_ia_{n,i}\sum\nolimits_{i\ge m}^\star\right\|\\
&=\frac{1}{\sum\nolimits_{i\ge m}a_{n,i}}\left\| t_n\sum\nolimits_{i< m}^\star +(1-t_n)\sum\nolimits_{i\ge m}^\star\right\|\\
&\le 2\kappa\left( t_n \sum\nolimits_{i<m} |a_{n,i}|+(1-t_n)\sum\nolimits_{i} |a_{n,i}|\right).
\end{split}
\end{equation}
where $\sum_{i \in I}^\star$ stands for $\sum_{i \in I}a_{n,i}\bm{x}_i$. 
Note that the hypothesis that the entries of $A$ are nonnegative has been used only in the first line of \eqref{eq:inequalitybeta}, so that $\sum^\star_{i\ge m}/\sum_{i\ge m}a_{n,i} \in K_m$. 
Since $\lim_n \sum_{i<m}|a_{n,i}|=0$, $\lim_n t_n=1$, and $\sup_n \sum_i |a_{n,i}|<\infty$ by the regularity of $A$, it follows that all $\alpha_n, \beta_n, \gamma_n$ are smaller than $\nicefrac{\varepsilon}{6}$ if $n$ is sufficiently large. 
Therefore $d(A_n\bm{x},K)\le \varepsilon$ and, since $\varepsilon$ is arbitrary, we conclude that $\bm{\eta}=\lim_n A_n\bm{x} \in K$. 

Lastly, suppose that $A$ is a regular matrix such that $\lim_n \sum_i |a_{n,i}|=1$ and let $B=(b_{n,i})$ be the nonnegative regular matrix defined by $b_{n,i}=|a_{n,i}|$ for all $n,i \in \mathbf{N}$. Considering that 
$$
d(A_n\bm{x}, K_m) \le \|A_n\bm{x}- B_n \bm{x}\|+d(B_n\bm{x}, K_m) \le \kappa \sum\nolimits_i |a_{n,i}-|a_{n,i}||+\varepsilon,
$$
and that $\lim_n\sum\nolimits_i |a_{n,i}-|a_{n,i}||= 0$ because $\lim_n \sum_i a_{n,i}=\lim_n\sum_i |a_{n,i}|=1$, we conclude that $d(A_n\bm{x}, K_m) \le 2\varepsilon$ whenever $n$ is sufficiently large. 
The claim follows as 
before. 
\end{proof} 
The following corollary is immediate:
\begin{cor}
Let $\bm{x}=(\bm{x}_n)$ be a bounded sequence taking values in $\mathbf{R}^k$, and fix a nonnegative regular matrix $A$. Then inclusion \eqref{eq:inclusioncoresFin} holds. 
\end{cor}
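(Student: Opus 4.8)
The plan is to deduce this directly from Proposition \ref{prop:inclusioncores} by verifying that a nonnegative regular matrix automatically meets its hypothesis $\lim_n \sum_i |a_{n,i}| = 1$. First I would invoke the Silverman--Toeplitz characterization of regular matrices (already used in the proof of Proposition \ref{prop:inclusioncores}), which guarantees in particular that $\lim_n \sum_i a_{n,i} = 1$ for every regular matrix $A = (a_{n,i})$.

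Next, since $A$ is assumed to have nonnegative entries, one has $|a_{n,i}| = a_{n,i}$ for all $n, i \in \mathbf{N}$, so that $\sum_i |a_{n,i}| = \sum_i a_{n,i}$ for every $n$. Combining this with the previous observation yields $\lim_n \sum_i |a_{n,i}| = \lim_n \sum_i a_{n,i} = 1$, which is precisely the condition required to apply Proposition \ref{prop:inclusioncores}. The inclusion \eqref{eq:inclusioncoresFin} then follows at once.

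There is essentially no obstacle here: the entire content of the corollary is subsumed by Proposition \ref{prop:inclusioncores}, and the only remark needed is that nonnegativity collapses the $\ell^1$ row sums $\sum_i |a_{n,i}|$ into the ordinary row sums $\sum_i a_{n,i}$, whose limit is pinned to $1$ by regularity. This is exactly why the statement can be flagged as immediate, and I would keep the proof to a single sentence recording this reduction.
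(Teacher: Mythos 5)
Your proof is correct and is exactly the reduction the paper intends: the corollary is stated as ``immediate'' precisely because nonnegativity gives $\sum_i |a_{n,i}| = \sum_i a_{n,i}$, whose limit is $1$ by the Silverman--Toeplitz conditions, so Proposition \ref{prop:inclusioncores} applies directly. No gaps.
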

\begin{rmk}\label{rmk:maddoxnonnegative}
Inclusion \eqref{eq:inclusioncoresFin} fails for an arbitrary regular matrix: indeed, let $A=(a_{n,i})$ be the  matrix defined by $a_{n,2n}=2$, $a_{n,2n-1}=-1$ for all $n \in \mathbf{N}$, and $a_{n,i}=0$ otherwise. 
Set also $k=1$ and let $x$ be the sequence such that $x_n=(-1)^n$ for all $n\in \mathbf{N}$. Then $A$ is regular and $\lim Ax=3 \notin \{-1,1\}=\mathrm{K}\text{-}\mathrm{core}(x)$. 
\end{rmk}
\begin{rmk}
Proposition \ref{prop:inclusioncores} keeps holding on a (possibly infinite dimensional) Hilbert space $X$ with the following provisoes: replace the definition of $\mathrm{K}\text{-}\mathrm{core}(\bm{x})$ with the \emph{closure} of $\mathrm{co}\,\mathrm{L}_{\bm{x}}$ (this coincides in the case that $X=\mathbf{R}^k$) and assume that the sequence $\bm{x}$ is contained in a compact set (so that $\mathrm{K}\text{-}\mathrm{core}(\bm{x})$ is also nonempty). 
\end{rmk}

With these premises, 
we can strenghten Theorem \ref{cor:regular} as follows.
\begin{thm}\label{cor:nonnnegativeregular}
The set $\{x \in (0,1]: \Gamma_b^k (x,\mathcal{I},A)=\Delta_{b}^k \text{ for all }b\ge 2, k\ge 1\}$ is comeager, provided that $\mathcal{I}$ is a meager ideal and $A$ is a regular matrix such that $\lim_n\sum_i |a_{n,i}|=1$. 
\end{thm}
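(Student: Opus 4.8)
The plan is to upgrade Theorem \ref{cor:regular} from the inclusion $\Gamma_b^k(x,\mathcal{I},A)\supseteq\Delta_b^k$ to an equality by supplying the reverse inclusion via Proposition \ref{prop:inclusioncores}. The decisive observation is that, unlike the $\supseteq$ direction (which is genuinely a genericity statement), the $\subseteq$ direction should hold for \emph{every} $x\in(0,1]$, so that the target set is just the intersection of a comeager set with the whole space.

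First I would invoke Theorem \ref{cor:regular}: since $A$ is in particular a regular matrix and $\mathcal{I}$ is meager, the set $\{x\in(0,1]:\Gamma_b^k(x,\mathcal{I},A)\supseteq\Delta_b^k\text{ for all }b\ge 2,k\ge 1\}$ is comeager. It then suffices to prove that $\Gamma_b^k(x,\mathcal{I},A)\subseteq\Delta_b^k$ for all $x$, $b$, $k$, after which the claimed set is the intersection of this comeager set with $(0,1]$, hence comeager.

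Next I would fix $x$, $b$, $k$ and argue as follows. The sequence $\bm{\pi}_b^k(x)$ lies in the simplex and is therefore bounded, and by the Silverman--Toeplitz bound $\sup_n\sum_i|a_{n,i}|<\infty$ the transformed sequence $A\bm{\pi}_b^k(x)$ is bounded as well. Since the hypothesis gives $\lim_n\sum_i|a_{n,i}|=1$, Proposition \ref{prop:inclusioncores} applies and yields $\mathrm{K}\text{-}\mathrm{core}(A\bm{\pi}_b^k(x))\subseteq\mathrm{K}\text{-}\mathrm{core}(\bm{\pi}_b^k(x))$. The accumulation points of $\bm{\pi}_b^k(x)$ are contained in $\Delta_b^k$, as recalled in the Introduction, and $\Delta_b^k$ is convex, being cut out by linear equalities and inequalities; hence $\mathrm{K}\text{-}\mathrm{core}(\bm{\pi}_b^k(x))=\mathrm{co}\,\mathrm{L}_{\bm{\pi}_b^k(x)}\subseteq\Delta_b^k$, and consequently $\mathrm{K}\text{-}\mathrm{core}(A\bm{\pi}_b^k(x))\subseteq\Delta_b^k$. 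Finally, because $\mathrm{Fin}\subseteq\mathcal{I}$, every $\mathcal{I}$-cluster point is an ordinary accumulation point, so $\Gamma_b^k(x,\mathcal{I},A)\subseteq\mathrm{L}_{A\bm{\pi}_b^k(x)}\subseteq\mathrm{co}\,\mathrm{L}_{A\bm{\pi}_b^k(x)}=\mathrm{K}\text{-}\mathrm{core}(A\bm{\pi}_b^k(x))\subseteq\Delta_b^k$, which is the desired inclusion.

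I do not expect a genuine obstacle at this stage: once Proposition \ref{prop:inclusioncores} is in hand, the whole argument reduces to assembling it with the convexity of $\Delta_b^k$ and the trivial remark that $\mathcal{I}$-cluster points are accumulation points. The one point to watch is that the Knopp-core inclusion genuinely requires the normalization $\lim_n\sum_i|a_{n,i}|=1$ rather than mere regularity of $A$; this is precisely the extra hypothesis distinguishing the present theorem from Theorem \ref{cor:regular}, and Remark \ref{rmk:maddoxnonnegative} shows that it cannot be dropped, since for a general regular matrix the transformed values may escape the convex set containing the original accumulation points.
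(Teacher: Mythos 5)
Your proposal is correct and follows essentially the same route as the paper: the authors likewise obtain the inclusion $\Gamma_b^k(x,\mathcal{I},A)\supseteq\Delta_b^k$ generically from Theorem \ref{cor:regular} and the reverse inclusion for every $x$ from Proposition \ref{prop:inclusioncores}, using that the convex hull of the accumulation points of $(\bm{\pi}_{b,n}^k(x))$ lies in $\Delta_b^k$. Your write-up is in fact slightly more explicit than the paper's (spelling out the convexity of $\Delta_b^k$ and that $\mathcal{I}$-cluster points are ordinary accumulation points), but there is no substantive difference.
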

\begin{proof}
Let us suppose that $A=(a_{n,i})$ is nonnegative regular matrix, i.e., $a_{n,i}\ge 0$ for all $n,i \in \mathbf{N}$, and fix a meager ideal $\mathcal{I}$, a real $x \in (0,1]$, and integers $b\ge 2$, $k\ge 1$. 
Thanks to Theorem \ref{cor:regular}, 
it is sufficient to show that every accumulation point of the sequence $(A_n\bm{\pi}_{b}^k(x): n\ge 1)$ is contained in the convex hull of the set of accumulation points of $(\bm{\pi}_{b,n}^k(x): n\ge 1)$, which is in turn contained into $\Delta_b^k$. This follows by Proposition \ref{prop:inclusioncores}.
\end{proof}

Since the family of meager sets is a $\sigma$-ideal, the following is immediate by Theorem \ref{cor:nonnnegativeregular}.
\begin{cor}\label{cor:uniformnonnegative}
Let $\mathscr{A}$ be a countable family of regular matrices such that $\lim_n\sum_i |a_{n,i}|=1$. 
Then the set 
$\{x \in (0,1]: \Gamma_b^k (x,\mathcal{I},A)=\Delta_{b}^k \text{ for all }b\ge 2, k\ge 1, \text{ and all }A \in \mathscr{A}\}$ is comeager, provided that $\mathcal{I}$ is a meager ideal.
\end{cor}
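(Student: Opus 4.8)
The plan is to reduce the statement to Theorem \ref{cor:nonnnegativeregular} by exploiting the countability of $\mathscr{A}$ together with the fact that the meager subsets of $(0,1]$ form a $\sigma$-ideal. The entire content of the corollary already resides in Theorem \ref{cor:nonnnegativeregular}; what remains is simply to make the conclusion uniform over the family $\mathscr{A}$.

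First, I would fix a meager ideal $\mathcal{I}$ and, for each regular matrix $A \in \mathscr{A}$ (recall that every such matrix satisfies $\lim_n \sum_i |a_{n,i}|=1$ by hypothesis), set
\[
G_A := \{x \in (0,1]: \Gamma_b^k(x,\mathcal{I},A)=\Delta_b^k \text{ for all }b\ge 2, k\ge 1\}.
\]
By Theorem \ref{cor:nonnnegativeregular}, each $G_A$ is comeager, so each complement $(0,1]\setminus G_A$ is meager. Next, I would observe that the set whose comeagerness we must establish is precisely $\bigcap_{A \in \mathscr{A}} G_A$, since requiring $\Gamma_b^k(x,\mathcal{I},A)=\Delta_b^k$ for all $b\ge 2$, all $k\ge 1$, and all $A \in \mathscr{A}$ amounts to demanding $x \in G_A$ simultaneously for every $A \in \mathscr{A}$.

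Passing to complements,
\[
(0,1]\setminus \bigcap_{A \in \mathscr{A}} G_A = \bigcup_{A \in \mathscr{A}} \bigl((0,1]\setminus G_A\bigr),
\]
which is a countable union (as $\mathscr{A}$ is countable) of meager sets. Since the family of meager subsets of $(0,1]$ is a $\sigma$-ideal, this union is itself meager, and hence its complement $\bigcap_{A \in \mathscr{A}} G_A$ is comeager, as desired.

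I do not expect any genuine obstacle here: the argument is the standard Baire-category stability of comeagerness under countable intersections, and the only hypotheses being used are the countability of $\mathscr{A}$ and the applicability of Theorem \ref{cor:nonnnegativeregular} to each individual member. The subtler work, namely the Knopp-core inclusion of Proposition \ref{prop:inclusioncores} that underlies Theorem \ref{cor:nonnnegativeregular}, has already been carried out, so the present statement follows immediately.
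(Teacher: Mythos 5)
Your argument is correct and is exactly the paper's: the authors state the corollary as immediate from Theorem \ref{cor:nonnnegativeregular} together with the fact that the meager subsets form a $\sigma$-ideal, which is precisely the countable-intersection reduction you carry out. Nothing further is needed.
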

It is worth to remark that the main result \cite{MR2672291} is obtained as an instance of Corollary \ref{cor:uniformnonnegative}, letting $\mathscr{A}$ be the set of iterates of the Cesàro matrix (note that they are nonnegative regular matrices), and setting $k=1$ and $\mathcal{I}=\mathrm{Fin}$. 
The same holds for the iterates of the H{\"o}lder matrix and the logarithmic Riesz matrix as in \cite[Sections 3 and 4]{MR4142282}.

Next, we show that the hypothesis $\lim_n \sum_i |a_{n,i}|=1$ for the entries of the regular matrix in Theorem  \ref{cor:nonnnegativeregular} cannot be removed. 
\begin{example}
Let $A=(a_{n,i})$ be the matrix such that $a_{n,(2n-1)!}=-1$ and $a_{n,(2n)!}=2$ for all $n \in \mathbf{N}$, and $a_{n,i}=0$ otherwise. It is easily seen that $A$ is regular. 
Then, set $b=2$, $k=1$, and $\mathcal{I}=\mathrm{Fin}$. 
We claim that the set of all $x \in (0,1]$ such that $2$ is an accumulation point of the sequence $\pi_{2,1}(x)=(\pi_{2,1,n}(x): n\ge 1)$ is comeager. Indeed, its complement can be rewritten as $\bigcup_{m,p}S_{m,p}$, where 
$$
S_{m,p}:=\{x \in (0,1]: |A_n\pi_{2,1}(x)-2|\ge \nicefrac{1}{m} \text{ for all }n\ge p\}.
$$ 
Let $x^\star\in (0,1]$ such that $d_{2,n}(x^\star)=1$ if and only if $(2i-1)!\le n<(2i)!$ for some $i \in \mathbf{N}$. Then it is easily seen that $\lim_n \pi_{2,1,n}(x^\star)=2$. 
Along the same lines of the proof of Theorem \ref{cor:regular}, it follows that each $S_{m,p}$ is meager. 
We conclude that $\{x \in (0,1]: \Gamma_2^1 (x,\mathrm{Fin},A)=\Delta_{2}^1\}$ is meager, which proves that the condition $\lim_n \sum_i |a_{n,i}|=1$ in the statement of Theorem \ref{cor:nonnnegativeregular} cannot be removed. 
\end{example}

In addition, the main result in \cite{Stylianou} states that Theorem \ref{cor:regular}, specialized to the case $\mathcal{I}=\mathrm{Fin}$ and $k=1$, can be further strengtened so that the set $\{x \in (0,1]: \Gamma_b^1 (x,\mathrm{Fin},A)\supseteq \Delta_{b}^1 \text{ for all }b\ge 2 \text{ and all regular } A\}$ is comeager. 
Taking into account the argument in the proof of Theorem \ref{cor:nonnnegativeregular}, this would imply that the set 
\begin{equation}\label{eq:setempty}
\{x \in (0,1]: \Gamma_b^1 (x,\mathrm{Fin},A)= \Delta_{b}^1 \text{ for all }b\ge 2 \text{ and all nonnegative regular } A\}
\end{equation}
should be comeager. 
However, this is false as it is shown in the next example.

\begin{example}\label{example10}
For each $y \in (0,1]$, let $(e_{y,k}: k\ge 1)$ be the increasing enumeration of the infinite set $\{n \in \mathbf{N}: d_{2,n}(y)=1\}$. 
Then, let $\mathscr{A}=\{A_y: y \in (0,1]\}$ be family of matrices $A_y=(a^{(y)}_{n,i})$ with entries in $\{0,1\}$ so that $a^{(y)}_{n,i}=1$ if and only if $e_{y,n}=i$ for all $y \in (0,1]$ and all $n,i \in \mathbf{N}$. Then each $A_y$ is a nonnegative regular matrix. 
It follows, for each ideal $\mathcal{I}$, 
$$
\{x \in (0,1]:  \Gamma_2^1 (x,\mathcal{I},A)=\Delta_{2}^1 
\text{ for all }
A \in \mathscr{A} \}=\emptyset.
$$
Indeed, for each $x \in (0,1]$, the sequence $\bm{\pi}_2^1(x)=(\bm{\pi}_{2,n}^1(x):n\ge 1)$ has an accumulation point $\bm{\eta} \in \Delta_2^1$. 
Hence there exists a subsequence $(\bm{\pi}_{2,n_k}^1(x):k\ge 1)$ which is convergent to $\bm{\eta}$. 
Equivalently, $\lim A_y\bm{\pi}_2^1(x)=\bm{\eta}$, where $y\in (0,1]$ is defined such that $e_{y,k}=n_k$ for all $k \in \mathbf{N}$. 
Therefore $\{\bm{\eta}\}=\Gamma_2^1 (x,\mathcal{I},A_y)\neq \Delta_{2}^1$. 
in particular, the set defined in \eqref{eq:setempty} is empty. 
\end{example}

Lastly, the analogues of Theorem \ref{cor:regular} and Theorem \ref{cor:nonnnegativeregular} hold for $\mathcal{I}$-limit points, if $\mathcal{I}$ is an $F_\sigma$-ideal or an analytic P-ideal. Indeed, denoting with $\Lambda_b^k(x,\mathcal{I},A)$ the set of $\mathcal{I}$-limit points of the sequence $(A_n\bm{\pi}_b^k(x): n\ge 1)$, we obtain:
\begin{thm}\label{thm:regularIlimitpoints}
Let $A$ be a regular matrix and let $\mathcal{I}$ be an $F_\sigma$-ideal or an analytic P-ideal. 
Then the set $\{x \in (0,1]: \Lambda_b^k (x,\mathcal{I},A)\supseteq \Delta_{b}^k \text{ for all }b\ge 2, k\ge 1\}$ is comeager.

Moreover, the set $\{x \in (0,1]: \Lambda_b^k (x,\mathcal{I},A)=\Delta_{b}^k \text{ for all }b\ge 2, k\ge 1\}$ is comeager if, in addition, $A$ satisfies $\lim_n\sum_i |a_{n,i}|=1$.
\end{thm}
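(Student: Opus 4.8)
The plan is to dispatch the two hypotheses on $\mathcal{I}$ by separate arguments, in each case retracing the relevant cluster-point proof and inserting the matrix estimate \eqref{eq:inequalityregularanalogue}. Suppose first that $\mathcal{I}$ is an $F_\sigma$-ideal. Then the statement reduces immediately to the cluster-point results: for every $x \in (0,1]$ the sequence $(A_n\bm{\pi}_b^k(x): n\ge 1)$ is bounded in $\mathbf{R}^{b^k}$ (since $\sup_n \sum_i |a_{n,i}|<\infty$ by regularity and the vectors $\bm{\pi}_{b,n}^k(x)$ lie in the simplex $\Delta_b^k$), so \cite[Theorem 2.3]{MR3883171} gives $\Lambda_b^k(x,\mathcal{I},A)=\Gamma_b^k(x,\mathcal{I},A)$ for all $b,k$. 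The first assertion then follows from Theorem \ref{cor:regular}, and, under the additional hypothesis $\lim_n\sum_i |a_{n,i}|=1$, the second follows from Theorem \ref{cor:nonnnegativeregular}.

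The substantive case is that of an analytic P-ideal. Fixing a lscsm $\varphi$ representing $\mathcal{I}$ as in \eqref{eq:analyticPideal} and integers $b,k$, I would introduce the matrix analogue of the function from the proof of Theorem \ref{thm:mainlimit}, namely $\mathfrak{u}_A(x,\bm{\eta}):=\lim_{t}\|\{n:\|A_n\bm{\pi}_b^k(x)-\bm{\eta}\|\le \nicefrac{1}{t}\}\|_\varphi$, whose sections $\mathfrak{u}_A(x,\cdot)$ are upper semicontinuous by \cite[Lemma 2.1]{MR3883171}, so that each set $\{\bm{\eta}\in \Delta_b^k:\mathfrak{u}_A(x,\bm{\eta})\ge \nicefrac{1}{2}\}$ is closed. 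Exactly as in that proof, it is enough to show that $X(\bm{\eta}):=\{x:\mathfrak{u}_A(x,\bm{\eta})\ge \nicefrac{1}{2}\}$ is comeager for each fixed $\bm{\eta}\in\Delta_b^k$, by writing its complement as $\bigcup_{t,h}Y_{t,h}$ with $Y_{t,h}=\{x:\varphi(\{n\ge h:\|A_n\bm{\pi}_b^k(x)-\bm{\eta}\|\le \nicefrac{1}{t}\})<\nicefrac{1}{2}\}$ and proving each $Y_{t,h}$ nowhere dense. Given a nonempty relatively open $G$ with associated string $\tilde{\bm{s}}\in S_b^j$, I would choose $x^\star$ compatible with $G$ and with $\lim_n\bm{\pi}_{b,n}^k(x^\star)=\bm{\eta}$; regularity of $A$ yields $\lim_n A_n\bm{\pi}_b^k(x^\star)=\bm{\eta}$, hence $\mathfrak{u}_A(x^\star,\bm{\eta})=1$, and by lower semicontinuity of $\varphi$ there is a \emph{finite} window $[h,j^{\prime\prime}]$ on which the $\varphi$-measure of $\{n:\|A_n\bm{\pi}_b^k(x^\star)-\bm{\eta}\|\le \nicefrac{1}{2t}\}$ already exceeds $\nicefrac{1}{2}$.

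The one genuine obstacle, absent in the identity-matrix case, is that $A_n\bm{\pi}_b^k(x)$ depends on the \emph{entire} tail of the digit expansion of $x$, so freezing finitely many digits does not freeze the transformed sequence exactly; this is why I reserve the sharper threshold $\nicefrac{1}{2t}$ for $x^\star$, leaving room for a perturbation of size $\nicefrac{1}{2t}$. The remedy is an ordering of quantifiers: first fix the window $[h,j^{\prime\prime}]$ using $x^\star$ alone, and only afterwards, because this window is finite, invoke \eqref{eq:inequalityregularanalogue}: for each of the finitely many $n\in[h,j^{\prime\prime}]$ one has $\sum_{i>j_A}|a_{n,i}|\to 0$ as $j_A\to\infty$, so a single $j_A$ forces $\|A_n\bm{\pi}_b^k(x)-A_n\bm{\pi}_b^k(x^\star)\|\le \nicefrac{1}{2t}$ simultaneously for all such $n$ whenever $x$ agrees with $x^\star$ through digit $j_A+k$. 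Setting $V:=\{x:d_{b,i}(x)=d_{b,i}(x^\star)\text{ for all }i\le j_A+k\}$ then gives $\|A_n\bm{\pi}_b^k(x)-\bm{\eta}\|\le \nicefrac{1}{t}$ on the good part of the window, so by monotonicity $\varphi(\{n\ge h:\|A_n\bm{\pi}_b^k(x)-\bm{\eta}\|\le \nicefrac{1}{t}\})\ge \nicefrac{1}{2}$ for every $x\in V$; thus $V\subseteq G$ and $V\cap Y_{t,h}=\emptyset$, proving nowhere density. Taking a countable dense $E\subseteq\Delta_b^k$ and intersecting the comeager sets $X(\bm{\eta})$ over $\bm{\eta}\in E$, the closed set $\{\bm{\eta}:\mathfrak{u}_A(x,\bm{\eta})\ge \nicefrac{1}{2}\}$ contains $\overline{E}=\Delta_b^k$, so by \cite[Theorem 2.2]{MR3883171} we obtain $\Delta_b^k\subseteq\Lambda_b^k(x,\mathcal{I},A)$ on a comeager set; a final countable intersection over $b,k$ proves the first assertion.

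For the equality statement under $\lim_n\sum_i |a_{n,i}|=1$, I would combine the inclusion just obtained with the reverse one, which holds deterministically for \emph{every} $x$. Indeed, since $\mathcal{I}\supseteq\mathrm{Fin}$, every $\mathcal{I}$-limit point of $(A_n\bm{\pi}_b^k(x))$ is in particular an ordinary accumulation point, and by Proposition \ref{prop:inclusioncores} these lie in $\mathrm{K}\text{-}\mathrm{core}(A\bm{\pi}_b^k(x))\subseteq\mathrm{K}\text{-}\mathrm{core}(\bm{\pi}_b^k(x))=\mathrm{co}\,\mathrm{L}_b^k(x)\subseteq\Delta_b^k$, the last inclusion by convexity of $\Delta_b^k$ together with $\mathrm{L}_b^k(x)\subseteq\Delta_b^k$. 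Hence $\Lambda_b^k(x,\mathcal{I},A)\subseteq\Delta_b^k$ for all $x$ and all $b,k$, and intersecting this full set with the comeager set on which the reverse inclusion holds yields comeagerness of $\{x:\Lambda_b^k(x,\mathcal{I},A)=\Delta_b^k\text{ for all }b,k\}$, completing the proof.
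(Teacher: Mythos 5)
Your proposal is correct and follows essentially the same route as the paper: the $F_\sigma$ case reduces to the cluster-point theorems via the identity $\Lambda=\Gamma$ for bounded sequences, the analytic P-ideal case reruns the lscsm argument of Theorem \ref{thm:mainlimit} with the $\nicefrac{1}{2t}$ threshold reserved for $x^\star$ on a finite window and a single digit cutoff chosen afterwards so that the tail sums $\sum_{i>j_A}|a_{n,i}|$ are uniformly small over that window (this is exactly the paper's $j^{\prime\prime\prime}$), and the equality statement follows from Proposition \ref{prop:inclusioncores} exactly as in Theorem \ref{cor:nonnnegativeregular}. Your write-up is in fact more explicit than the paper's on the quantifier ordering and on why the cited limit-point/cluster-point identities apply to the transformed sequence.
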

\begin{proof}
The first part goes along the same lines of the proof of Theorem \ref{thm:mainlimit}. Here, we replace $\bm{\pi}_b^k(x)$ with $(A_n\bm{\pi}_b^k(x): n\ge 1)$ and using the chain of inequalities \eqref{eq:inequalityregularanalogue}: more precisely, we consider $j^{\prime\prime}\in \mathbf{N}$ such that 
$
\varphi(\{n \in [j^\prime, j^{\prime\prime}]: \|A_n \bm{\pi}^k_b(x^\prime)-\bm{\eta}\|\le \nicefrac{1}{2t}\})\ge \nicefrac{1}{2},
$
and, taking into considering \eqref{eq:inequalityregularanalogue}, we define $V:=\{x \in (0,1]: d_{b,i}(x)=d_{b,i}(x^\star) \text{ for all }i=1,\ldots,k+j^{\prime\prime\prime}\}$, where $j^{\prime\prime\prime}$ is a sufficiently large integer such that $\sum_{i>j^{\prime\prime\prime}}|a_{n,i}|\le \nicefrac{1}{2t}$ for all $n \in [j^\prime, j^{\prime\prime}]$. 

The second part follows, as in Theorem \ref{cor:nonnnegativeregular},  by the fact that every accumulation point of $(A_n\bm{\pi}_b^k(x): n\ge 1)$ belongs to $\Delta_b^k$.
\end{proof}

\section{Acknowledgments.} 
P.~Leonetti is grateful to PRIN 2017 (grant 2017CY2NCA) for financial support.

\bibliographystyle{amsplain}

\end{document}